\newtheorem{remark}{Remark}[section] 
\newtheorem{example}{Example}[section] 
\title{High-order BDF fully discrete scheme for backward fractional Feynman-Kac equation with nonsmooth data 
\thanks{This work was supported by the National Natural Science Foundation of China under Grant No. 11671182, and the AI and Big Data Funds under Grant No. 2019620005000775.
}}
\author{Jing Sun\footnotemark[2] \and Daxin Nie\footnotemark[2]
\and Weihua Deng\footnotemark[2]\thanks{ School of Mathematics and Statistics, Gansu Key Laboratory of Applied Mathematics and Complex Systems, Lanzhou University, Lanzhou 730000, P.R. China (Email: dengwh@lzu.edu.cn).}
}
\begin{document}

\maketitle

\begin{abstract}
 The Feynman-Kac equation governs the distribution of the statistical observable --- functional,  having wide applications in almost all disciplines. After overcoming challenges from the time-space coupled nonlocal operator and the possible low regularity of functional, this paper develops the high-order fully discrete scheme for the backward fractional Feynman-Kac equation by using  backward difference formulas (BDF) convolution quadrature in time, finite element method in space, and some correction terms. With a systematic correction, the high convergence order is achieved up to $6$ in time, without deteriorating the optimal convergence in space and without the regularity requirement on the solution. Finally, the extensive numerical experiments validate the effectiveness of the high-order schemes.

%
%
\end{abstract}
\begin{keywords}
fractional substantial derivative,  time-space coupled operator, finite element method,  higher-order backward difference formulas,  convolution quadrature,  error analysis.
\end{keywords}

\begin{AMS}
35R11, 65M60, 65M12, 65F08
\end{AMS}

\pagestyle{myheadings}
\thispagestyle{plain}
\markboth{J. Sun, D. X. Nie and W. H. Deng}{The modified higher-order BDF scheme for backward fractional Feynman-Kac equation}

\noindent{\section{Introduction}}
Functional is a class of statistical observables, having wide applications in almost all disciplines \cite{Deng2020}. Anomalous diffusions are ubiquitous in the nature world. The distribution of functionals for anomalous diffusion is governed by fractional Feynman-Kac equation \cite{Carmi2010,Deng2020,TurgemanCarmiBarkai:09}. From the point view of practical applications, one has to resort to numerical method to get the solution of the fractional Feynman-Kac equation; there are already some works on this issue; see, e.g., \cite{Chenminghua:15,ChenDeng:18,DengLiQianWang:18,ZhangDeng:17}, but most of the numerical analyses of which need the regularity assumption on the exact solution and the functional.
This paper provides the high-order schemes for the fractional Feynman-Kac equation without the regularity requirements on both the exact solution and the functional.


In the past few years, the high-order algorithms have been proposed extensively for fractional diffusion equation \cite{chen2015,ford2017,jin2017,li:2020,yan2018}, the nonlocal property of which makes them keep the same computational cost but greatly improve the accuracy. As for fractional Feynman-Kac equation, there are less relative discussions; \cite{Chenminghua:15} studies the high-order scheme to solve backward fractional Feynman-Kac equation with truncated L{\'e}vy flights, and the detailed error and stability analyses for the $1$-st order scheme are provided with some regularity assumptions on the solution; \cite{ChenDeng:18} provides a high-order scheme for the time tempered fractional Feynman-Kac equation when the solution $G(x_{0},\rho,t)\in C^{2}(\Omega)$. In practice, the regularity assumptions on the solution and functional are usually hard to be satisfied. So the robust numerical algorithms without any regularity assumptions are more practicable.

Recently, \cite{Sun:2020} provides $1$-st and $2$-nd order schemes to solve homogeneous backward fractional Feynman-Kac equation, and the relative error estimates without regularity assumptions are established, but higher-order scheme of this approach for backward fractional Feynman-Kac equation is still not available and there are many challenges in deriving the error estimates 
for the case with nonsmooth data. Here, under the condition of ensuring the spatial accuracy, we provide a systematic $k$-th order $(k=1,2,\ldots,6)$ fully discrete scheme with a complete error analysis for inhomogeneous backward fractional Feynman-Kac equation \cite{Carmi2010, Deng2020}, i.e.,
\begin{equation}\label{equaty}
\left\{
\begin{aligned}
&\frac{\partial G(x_0,\rho,t)}{\partial t}=\,_0D^{1-\alpha,x_0}_t\Delta G(x_0,\rho,t)\\
&\qquad \qquad \qquad\qquad -\rho U(x_0)G(x_0,\rho,t)+f(x_0,\rho,t), \qquad(x_0,t)\in \Omega\times(0,T],\\
&G(x_0,\rho,0)=G_0(x_0),\qquad\qquad\qquad\qquad\qquad\qquad\qquad\qquad  x_0\in \Omega,\\
&G(x_0,\rho,t)=0,\qquad\qquad\quad\qquad\qquad\qquad\qquad\qquad\qquad~~~ (x_0,t)\in \partial \Omega\times(0,T],
\end{aligned}
\right.
\end{equation}
where $G(x_0,\rho,t)=\int_{-\infty}^{\infty}G(x_0,\mathbb{A},t)e^{-\mathbf{i}\rho \mathbb{A}}d \mathbb{A}$ with $\mathbf{i}$ being the imaginary unit; $G(x_0, \mathbb{A}, t)$ is the joint probability density function of finding the particle with the functional $\mathbb{A}$ at time $t$ and the initial position of the particle at $x_0$; the functional 
$\mathbb{A}=\int_{0}^{t}U(x_{0}(\tau))d\tau$ with $U(x_0)$ being a prescribed function depending on the concrete applications \cite{Deng2020,Kac:1949} and $x_0(t)$ a trajectory of anomalous diffusion starting at $x_{0}$; $\Delta$ means the Laplace operator; $\alpha\in(0,1)$; $f(x_0,\rho,t)$ is the source term; $\Omega$ is a bounded convex polygonal domain in $\mathbb{R}^{n}$ $(n=1,2,3)$ and we assume that $U(x_{0})$ is bounded in $\bar{\Omega}$ in this paper; $T$ is a fixed final time; $~_{0}D^{\alpha,x_0}_t$ denotes Riemann-Liouville fractional substantial derivative defined by \cite{LiDengZhao:19}
\begin{equation}\label{eqRLFD}
\begin{aligned}
~_{0}D^{\alpha,x_0}_tG(x_0,\rho,t)=e^{-t\rho U(x_0)}~_0D^{\alpha}_t(e^{t\rho U(x_0)}G(x_0,\rho,t)), \qquad \alpha\in(0,1),
\end{aligned}
\end{equation}
and $~_{0}D^{\alpha}_{t}$ means the Riemann-Liouville fractional derivative with its definition \cite{Podlubny1999}
\begin{equation*}
~_{0}D^{\alpha}_{t}G(x_0,\rho,t)=\frac{1}{\Gamma(1-\alpha)}\frac{\partial}{\partial t}\int^t_{0}(t-\xi)^{-\alpha}G(x_0,\rho,\xi)d\xi, \qquad \alpha\in(0,1).
\end{equation*}

Following \cite{ZhangDeng:17} and using the relationship between the Caputo and Riemann-Liouville fractional derivatives, one can get the equivalent form of Eq. \eqref{equaty}, i.e.,
\begin{equation}\label{eqretosol}
\left\{
\begin{aligned}
& \,_0D^{\alpha,x_0}_tG(x_0,\rho,t)-\Delta G(x_0,\rho,t)\\
& ~~ =e^{-\rho U(x_{0})t}\,_0D^{\alpha}_tG(x_0,\rho,0)+\,_{0}I^{1-\alpha,x_{0}}_{t}f(x_0,\rho,t), \quad(x_0,t)\in \Omega\times(0,T],\\
&G(x_0,\rho,0)=G_0(x_0),\qquad\qquad\qquad\qquad\qquad\qquad\qquad  x_0\in \Omega,\\
&G(x_0,\rho,t)=0,\qquad\qquad\quad\qquad\qquad\qquad\qquad\qquad\quad\,\, (x_0,t)\in \partial\Omega\times(0,T],
\end{aligned}
\right.
\end{equation}
where  $~_{0}I^{\alpha,x_{0}}_{t}$ means Riemann-Liouville fractional substantial integral defined by \cite{LiDengZhao:19}
\begin{equation*}
\begin{aligned}
~_{0}I^{\alpha,x_0}_tf=\frac{1}{\Gamma(\alpha)}\int_{0}^{t}(t-\tau)^{\alpha-1}e^{-\rho U(x_{0})(t-s)}f(s)ds,\qquad \alpha\in(0,1).
\end{aligned}
\end{equation*}
Compared \eqref{eqretosol} with \eqref{equaty}, we separate the operators $_{0}D^{1-\alpha,x_{0}}_{t}$ and $-\Delta$ to reduce the influences of the regularity of $U(x_{0})$ on convergence order in space, it is more effective to establish the numerical scheme based on \eqref{eqretosol} instead of \eqref{equaty}.

The correction scheme of higher-order BDF convolution quadrature for fractional evolution equation is provided in \cite{jin2017}. If its idea is applied to solve Eq. \eqref{eqretosol}, it may deteriorate the convergence order in space.
The main reason is that the Riemann-Liouville fractional substantial derivative is a time-space coupled nonlocal operator, which makes $\beta(z,x_{0})$ and $L^{2}$ projection $P_{h}$ non-commutable (one can refer to Secs. $2$ and $3$ for the definitions of the operators). 
Thus, to preserve the optimal convergence rates in space, we build the finite element scheme by applying the $L^{2}$ projection operators  $P_{h}$ on $e^{-\rho U(x_{0})t}~_{0}D^{\alpha}_{t}G_{0}$ and $~_{0}I^{1-\alpha,x_{0}}_{t}f$ instead of $G_{0}$ and $f$,
which avoids estimating the errors aroused by $\|((\beta_{\tau,k}(z))^{\alpha}+A_{h})^{-1}P_{h}((\beta_{\tau,k}(z))^{\alpha-1}\mu_{k}(e^{-\beta(z)\tau})G_{0})-((\beta_{\tau,k}(z,x_{0}))^{\alpha}+A_{h})^{-1}(\beta_{\tau,k}(z))^{\alpha-1}\mu_{k}(e^{-\beta(z)\tau})P_{h}G_{0}\|_{L^{2}(\Omega)}$,  and the regularity requirement on $U(x_{0})$ is also relaxed naturally. Moreover, according to the finite element scheme \eqref{eqfinsch} (see Sec. 3), we provide novel high-order fully discrete schemes, which can achieve $k$-th order convergence in time and optimal convergence in space without any regularity assumptions on exact solution.

The rest of the paper is organized as follows. We first provide some preliminaries and a regularity estimate for the solution of Eq. \eqref{eqretosol} in Sec. 2.  In Sec. 3, we use the finite element method to discretize the Laplace operator and provide novel high-order approximations in time based on the high-order BDF convolution quadrature. The error analyses presented in Sec. 4 show that our schemes can not only preserve $k$-th order convergence in time, but also achieve optimal convergence rate in space.
In Sec. 5, extensive numerical experiments are performed to show the effectiveness of the schemes.
We conclude the paper with some discussions in the last section. Throughout the paper, the generic constant $C>0$ may be different at different occurrences and $\epsilon>0$ is an arbitrarily small constant.

\section{Preliminaries}

We set $A=-\Delta$ with a zero Dirichlet condition in the following. For any $ q\geq  0 $, denote the space $ \dot{H}^{q}(\Omega)=\{v\in L^2(\Omega): \|v\|^2_{\dot{H}^q(\Omega)}<\infty \}$ with the norm \cite{Thomee2006}
\begin{equation*}
\|v\|^2_{\dot{H}^q(\Omega)}=\sum_{j=1}^{\infty}\lambda_j^q(v,\varphi_j)^2,
\end{equation*}
where $ {(\lambda_j,\varphi_j)} $ are the eigenvalues ordered non-decreasingly and the corresponding eigenfunctions (normalized in the $ L^2(\Omega) $ norm) of operator $A$.

Below  we define sectors $\Sigma_{\theta}$ and $\Sigma_{\theta,\kappa}$ in the complex plane $\mathbb{C}$, i.e., for $\kappa>0$ and $\pi/2<\theta<\pi$,
\begin{equation*}
\begin{aligned}
&\Sigma_{\theta}=\{z\in\mathbb{C}\setminus \{0\},|\arg z|\leq \theta\}, \\
&\Sigma_{\theta,\kappa}=\{z\in\mathbb{C}:|z|\geq\kappa,|\arg z|\leq \theta\},\\
\end{aligned}
\end{equation*}
and the contour $\Gamma_{\theta,\kappa}$ is defined by
\begin{equation*}
\Gamma_{\theta,\kappa}=\{z\in\mathbb{C}: |z|=\kappa,|\arg z|\leq \theta\}\cup\{z\in\mathbb{C}: z=r e^{\pm \mathbf{i}\theta}: r\geq \kappa\},
\end{equation*}
oriented with an increasing imaginary part, where $\mathbf{i}$ denotes the imaginary unit and $\mathbf{i}^2=-1$. Then we denote  $\|\cdot\|$ as the operator norm from $L^2(\Omega)$ to $L^2(\Omega)$, and use the notation `$\widetilde{u}$' as the Laplace transform of $u$ and the abbreviations $G(t)$, $G_0$ and $f$ for $G(x_0,\rho,t)$, $G_0(x_0)$ and $f(x_0,\rho,t)$ respectively in the following.

According to the Laplace transforms of Riemann-Liouville fractional substantial derivative and integral \cite{LiDengZhao:19}, the Laplace transform representation of Eq. \eqref{eqretosol} can be given as
\begin{equation}\label{equsolrep}
\tilde{G}(z)=((\beta(z,x_0))^{\alpha}+A)^{-1}(\beta(z,x_0))^{\alpha-1}G_{0}+((\beta(z,x_0))^{\alpha}+A)^{-1}(\beta(z,x_0))^{\alpha-1}\tilde{f},
\end{equation}
where
\begin{equation}\label{defbeta}
\beta(z,x_0)=z+\rho U(x_0)
\end{equation}
and we denote it briefly by $\beta(z)$ below. Then we present an estimate of $\beta(z)$ and the regularity estimate of the solution of Eq. \eqref{eqretosol}.
\begin{lemma}[\cite{DengLiQianWang:18}]\label{lemmaBeta}
	Let $\beta(z)$ be defined in \eqref{defbeta} and $U(x_0)$ is bounded in $\bar{\Omega}$. By choosing $\theta \in \left(\frac{\pi}{2},\pi\right)$ sufficiently close to $\frac{\pi}{2}$
	and $\kappa>0$ sufficiently large (depending on the value $|{{\rho}}|\|U(x_0)\|_{L^{\infty}(\bar{\Omega})}$), we have 
	\begin{enumerate}[(1)]
		\item For all $x_{0}\in \bar{\Omega}$ and ${{z}}\in \Sigma_{\theta,\kappa}$, it holds that $\beta({{z}}) \in \Sigma_{\frac{3\pi}{4},\frac{\kappa}{2}}$ and
		\begin{equation}\label{chapter4section2_2prop1conc1}
		C_1|{{z}}|\leq|\beta({{z}})|\leq C_2|{{z}}|,
		\end{equation}
		where $C_1$ and $C_2$ denote two positive constants.
		So $\beta({{z}})^{1-\alpha}$ and $\beta({{z}})^{\alpha-1}$ are both   analytic function of ${{z}}\in \Sigma_{\theta,\kappa} $.
		
		\item The operator $((\beta({{z}}))^\alpha+A)^{-1}:L^2(\Omega)\rightarrow L^2(\Omega)$ is well-defined, bounded, and analytic with respect to $z\in \Sigma_{\theta,\kappa}$, satisfying
		\begin{equation}\label{chapter4section2_2prop1conc21}
		\|A((\beta(z))^\alpha+A)^{-1}\|\leq C~~~~~ \forall{{z}} \in \Sigma_{\theta,\kappa},
		\end{equation}
		and
		\begin{equation}\label{chapter4section2_2prop1conc22}
		\|((\beta({{z}}))^\alpha+A)^{-1}\|\leq C|{{z}}|^{-\alpha}~~~\forall{{z}} \in \Sigma_{\theta,\kappa}.
		\end{equation}
	\end{enumerate}
\end{lemma}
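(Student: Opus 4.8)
The plan is to prove the three assertions in the natural order, exploiting the fact that $\beta(z) = z + \rho U(x_0)$ is just a bounded (in $x_0$) shift of $z$, so that for $|z|$ large the behaviour of $\beta(z)$ is governed by $z$. First I would establish \eqref{chapter4section2_2prop1conc1} and the sector inclusion. Write $M = |\rho|\,\|U(x_0)\|_{L^\infty(\bar\Omega)}$. For $z \in \Sigma_{\theta,\kappa}$ we have $|z|\ge\kappa$, hence $|\beta(z)| \le |z| + M \le (1 + M/\kappa)|z|$ and $|\beta(z)| \ge |z| - M \ge (1 - M/\kappa)|z|$; choosing $\kappa > 2M$ gives \eqref{chapter4section2_2prop1conc1} with $C_1 = 1/2$, $C_2 = 3/2$, and also $|\beta(z)| \ge \kappa/2$. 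For the argument: the perturbation $\rho U(x_0)$ is real, so it can only move $\arg z$ toward $0$ or $\pm\pi$ in a controlled way; since $|z|\ge\kappa$ and the real shift has modulus $\le M$, a short trigonometric estimate shows $|\arg\beta(z)| \le \theta + \arcsin(M/\kappa)$ when $\arg z$ is in the second/third quadrant, and $|\arg\beta(z)|\le\theta$ otherwise. Taking $\theta$ close enough to $\pi/2$ and $\kappa$ large enough that $\theta + \arcsin(M/\kappa) \le 3\pi/4$ yields $\beta(z)\in\Sigma_{3\pi/4,\kappa/2}$. Analyticity of $\beta(z)^{1-\alpha}$ and $\beta(z)^{\alpha-1}$ then follows because $\beta(z)$ avoids the branch cut along the negative real axis (its argument stays in $[-3\pi/4,3\pi/4]$) and is itself analytic (indeed affine) in $z$.

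For part (2), the key point is that $\beta(z)^\alpha \in \Sigma_{3\alpha\pi/4,\,(\kappa/2)^\alpha}$, and since $3\alpha\pi/4 < 3\pi/4 < \pi$ while $A = -\Delta$ is a self-adjoint positive operator with spectrum in $[\lambda_1,\infty) \subset (0,\infty)$, the resolvent $((\beta(z))^\alpha + A)^{-1}$ is well-defined: $-\beta(z)^\alpha$ never lies in the spectrum of $A$ because it has nonzero imaginary part unless $\beta(z)^\alpha$ is a positive real, in which case $\beta(z)^\alpha + A$ is positive definite and invertible. Quantitatively, using the spectral decomposition, for any $w\in\Sigma_{\phi}$ with $\phi<\pi$ one has the standard resolvent bounds $\|(w+A)^{-1}\| \le C_\phi |w|^{-1}$ and $\|A(w+A)^{-1}\| \le C_\phi$; applying this with $w = \beta(z)^\alpha$ (whose argument is bounded by $3\alpha\pi/4 < \pi$) gives $\|A((\beta(z))^\alpha+A)^{-1}\| \le C$ and $\|((\beta(z))^\alpha+A)^{-1}\| \le C|\beta(z)|^{-\alpha}$. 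Combining the latter with the lower bound $|\beta(z)| \ge C_1|z|$ from part (1) produces \eqref{chapter4section2_2prop1conc22}, namely $\|((\beta(z))^\alpha+A)^{-1}\| \le C|z|^{-\alpha}$. Analyticity of the resolvent in $z$ follows from analyticity of $z \mapsto \beta(z)^\alpha$ together with the fact that the resolvent is analytic in its argument wherever it exists.

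The main obstacle is the argument bound in part (1): one must verify carefully that adding the real quantity $\rho U(x_0)$, which can be negative and of size up to $M$, does not push $\arg\beta(z)$ past $3\pi/4$. This is where both hypotheses are used in tandem --- $\theta$ close to $\pi/2$ controls the starting argument, and $\kappa$ large controls how much a bounded real shift can rotate a vector of modulus $\ge\kappa$. A clean way to see it: the worst case is $z = \kappa e^{\mathbf{i}\theta}$ with $\theta$ just above $\pi/2$, so $\mathrm{Re}\,z$ is small and negative; then $\mathrm{Re}\,\beta(z) = \mathrm{Re}\,z + \rho U(x_0)$ has modulus at most $|\mathrm{Re}\,z| + M$, while $|\mathrm{Im}\,\beta(z)| = |\mathrm{Im}\,z| \ge \kappa\sin\theta \approx \kappa$, so $|\arg\beta(z) - \pi/2| \le \arctan((|\mathrm{Re}\,z|+M)/\kappa\sin\theta)$, which can be made less than $\pi/4$ by taking $\kappa$ large and $\theta$ close to $\pi/2$. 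Since this is a result quoted from \cite{DengLiQianWang:18}, in the write-up I would present the estimates compactly and refer to that reference for the full details rather than belabouring the elementary trigonometry.
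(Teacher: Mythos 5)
The paper does not prove this lemma at all --- it is quoted verbatim from \cite{DengLiQianWang:18} --- so your proposal is being measured against the standard argument rather than against anything in the text, and on that score it is essentially correct: the perturbation bound $|z|-M\le|\beta(z)|\le|z|+M$ with $\kappa>2M$, the angle estimate $|\arg\beta(z)|\le\theta+\arcsin(M/\kappa)\le 3\pi/4$, and the reduction of part (2) to the sectorial resolvent bound $\|(w+A)^{-1}\|\le C_\phi|w|^{-1}$ for $w=\beta(z)^\alpha\in\Sigma_{3\alpha\pi/4}$ applied to the self-adjoint positive operator $A$ is exactly the intended route. One correction: you assert twice that $\rho U(x_0)$ is real (and in particular that $|\mathrm{Im}\,\beta(z)|=|\mathrm{Im}\,z|$), but in this paper $\rho$ is genuinely complex --- the numerical examples use $\rho=-1+\pi\mathbf{i}$, and the proof of Lemma 4.2 explicitly splits $\rho U(x_0)$ into real and imaginary parts. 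This does not break your proof, because the inequality you actually use, $|\arg\beta(z)-\arg z|\le\arcsin(M/\kappa)$, holds for an arbitrary complex perturbation of modulus at most $M=|\rho|\|U\|_{L^\infty(\bar\Omega)}$ applied to a point with $|z|\ge\kappa>M$; you should simply drop the ``real'' justification and the identity $|\mathrm{Im}\,\beta(z)|=|\mathrm{Im}\,z|$ (replace the latter by $|\mathrm{Im}\,\beta(z)|\ge|\mathrm{Im}\,z|-M$), and the rest goes through unchanged.
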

Combining \eqref{equsolrep} and Lemma \ref{lemmaBeta}, one can get the regularity estimate for the solution of Eq. \eqref{eqretosol} (refer to \cite{Sun:2020} for the detailed proof).
\begin{theorem}\label{thmreg}
	Let $G(t)$ be the solution of Eq. \eqref{eqretosol}. Assume $U(x_0)$ is bounded in $\bar{\Omega}$. If $G_{0}\in L^{2}(\Omega)$ and $\int_{0}^{t}(t-s)^{-\sigma\alpha/2}\|f(s)\|_{L^2(\Omega)}ds< \infty$, then we have the estimate
	\begin{equation*}
	\|G(t)\|_{\dot{H}^{\sigma}(\Omega)}\leq Ct^{-\sigma\alpha/2}\|G_0\|_{L^2(\Omega)}+C\int_{0}^{t}(t-s)^{-\sigma\alpha/2}\|f(s)\|_{L^2(\Omega)}ds, \quad \sigma\in[0,2].
	\end{equation*}
\end{theorem}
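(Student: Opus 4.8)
The plan is to start from the Laplace-domain representation \eqref{equsolrep} and recover $G(t)$ by inverse Laplace transform along the contour $\Gamma_{\theta,\kappa}$,
\begin{equation*}
G(t)=\frac{1}{2\pi\mathbf{i}}\int_{\Gamma_{\theta,\kappa}}e^{zt}\Big(((\beta(z))^{\alpha}+A)^{-1}(\beta(z))^{\alpha-1}G_{0}+((\beta(z))^{\alpha}+A)^{-1}(\beta(z))^{\alpha-1}\tilde{f}(z)\Big)\,dz,
\end{equation*}
which is legitimate because Lemma \ref{lemmaBeta} guarantees that the integrand is analytic on and to the right of $\Gamma_{\theta,\kappa}$. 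I would treat the two terms separately and estimate each in the $\dot H^{\sigma}(\Omega)$ norm, $\sigma\in[0,2]$.

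For the homogeneous part, the key is to bound $\|A^{\sigma/2}((\beta(z))^{\alpha}+A)^{-1}(\beta(z))^{\alpha-1}\|$. Writing $A^{\sigma/2}((\beta(z))^{\alpha}+A)^{-1}=\big(A((\beta(z))^{\alpha}+A)^{-1}\big)^{\sigma/2}\big(((\beta(z))^{\alpha}+A)^{-1}\big)^{1-\sigma/2}$ and using interpolation between \eqref{chapter4section2_2prop1conc21} and \eqref{chapter4section2_2prop1conc22}, one gets $\|A^{\sigma/2}((\beta(z))^{\alpha}+A)^{-1}\|\le C|z|^{-\alpha(1-\sigma/2)}$; combined with $|\beta(z)|^{\alpha-1}\le C|z|^{\alpha-1}$ from \eqref{chapter4section2_2prop1conc1}, the integrand in the $\dot H^{\sigma}$ norm is $O(|z|^{-1-\alpha\sigma/2}\,e^{\mathrm{Re}(z)t})$. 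Deforming to $\Gamma_{\theta,1/t}$ (i.e. taking $\kappa\sim 1/t$) and splitting the contour into the circular arc $|z|=1/t$ and the two rays, the standard estimate $\int_{\Gamma_{\theta,1/t}}|z|^{-1-\alpha\sigma/2}e^{\mathrm{Re}(z)t}\,|dz|\le C t^{\alpha\sigma/2}$ yields $\|G_{\mathrm{hom}}(t)\|_{\dot H^{\sigma}}\le C t^{-\alpha\sigma/2}\|G_0\|_{L^2}$.

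For the inhomogeneous part I would avoid working with $\tilde f$ directly and instead write the solution as a convolution in the time domain: $G_{\mathrm{inh}}(t)=\int_0^t E(t-s)f(s)\,ds$, where $E(t)$ is the inverse Laplace transform of $((\beta(z))^{\alpha}+A)^{-1}(\beta(z))^{\alpha-1}$. The same contour estimate as above (now with no extra $\tilde f$ factor, so the integrand decays like $|z|^{-1-\alpha\sigma/2}$ after applying $A^{\sigma/2}$) gives the kernel bound $\|A^{\sigma/2}E(t)\|\le C t^{-1+\alpha(1-\sigma/2)}\cdot t^{-\alpha}\cdot t = $ — more carefully, $\|A^{\sigma/2}E(t)\|\le C t^{\alpha\sigma/2-1}$ after accounting for the $(\beta(z))^{\alpha-1}$ factor — so that $\|G_{\mathrm{inh}}(t)\|_{\dot H^{\sigma}}\le C\int_0^t (t-s)^{-\sigma\alpha/2}\|f(s)\|_{L^2}\,ds$, which is exactly the hypothesis that was assumed finite. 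Adding the two bounds gives the claim.

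The main obstacle is the fractional power $(\beta(z))^{\alpha-1}$ inside the resolvent together with the $x_0$-dependence of $\beta(z)$: one cannot simply diagonalize in the eigenbasis of $A$ because $\beta(z)=z+\rho U(x_0)$ is a multiplication operator that does not commute with $A$. This is precisely why Lemma \ref{lemmaBeta} is phrased pointwise in $x_0$ and packaged as operator bounds \eqref{chapter4section2_2prop1conc21}--\eqref{chapter4section2_2prop1conc22}; the proof must use those operator inequalities as black boxes rather than spectral calculus. The other delicate point is the sharpness of the contour splitting so that the singular kernel $(t-s)^{-\sigma\alpha/2}$ — rather than a worse power — appears, and checking that the interpolation argument for $\|A^{\sigma/2}((\beta(z))^{\alpha}+A)^{-1}\|$ is valid uniformly for $\sigma\in[0,2]$ and $z\in\Sigma_{\theta,\kappa}$.
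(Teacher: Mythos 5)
Your approach is the same one the paper takes (the paper itself only sketches it, deferring the details to \cite{Sun:2020}): invert the Laplace representation \eqref{equsolrep} along $\Gamma_{\theta,\kappa}$ with $\kappa\sim 1/t$, bound $\|A^{\sigma/2}((\beta(z))^{\alpha}+A)^{-1}\|\le C|z|^{-\alpha(1-\sigma/2)}$ by interpolating \eqref{chapter4section2_2prop1conc21} and \eqref{chapter4section2_2prop1conc22} via the moment inequality $\|w\|_{\dot H^{\sigma}}\le\|w\|_{\dot H^{2}}^{\sigma/2}\|w\|_{L^{2}}^{1-\sigma/2}$ (which is the correct way to realize your factorization, since $((\beta(z))^{\alpha}+A)^{-1}$ is not a function of $A$), and treat the source term as a convolution with the operator kernel $E(t)$. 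This is correct in outline, and your identification of the non-commutativity of $\beta(z)$ with $A$ as the reason one must use the operator bounds rather than spectral calculus is exactly right.

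Two exponents are miswritten, and as stated they contradict the conclusions you then draw. The integrand for the homogeneous part satisfies $\|A^{\sigma/2}((\beta(z))^{\alpha}+A)^{-1}(\beta(z))^{\alpha-1}\|\le C|z|^{-\alpha(1-\sigma/2)}\cdot|z|^{\alpha-1}=C|z|^{\alpha\sigma/2-1}$, not $|z|^{-1-\alpha\sigma/2}$; the scaling $z\mapsto z/t$ on $\Gamma_{\theta,1/t}$ then gives $\int_{\Gamma_{\theta,1/t}}|z|^{\alpha\sigma/2-1}e^{\mathrm{Re}(z)t}\,|dz|\le Ct^{-\alpha\sigma/2}$, which is the bound you want (with your exponent you would instead get $t^{+\alpha\sigma/2}$). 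Likewise the kernel bound is $\|A^{\sigma/2}E(t)\|\le Ct^{-\alpha\sigma/2}$, not $Ct^{\alpha\sigma/2-1}$; the former is what produces the weight $(t-s)^{-\sigma\alpha/2}$ in the convolution, the latter would produce $(t-s)^{\alpha\sigma/2-1}$. With these corrections the argument closes as claimed.
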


\section{Modified high-order BDF fully discrete scheme}
In this section, we first use finite element method to discretize Laplace operator in \eqref{eqretosol}. Then the modified high-order BDF fully discrete scheme is constructed based on finite element semi-discrete scheme and the corresponding correction criteria are also proposed.

 Let $\mathcal{T}_h$ be a shape regular quasi-uniform partitions of the domain $\Omega$,  where $h$ is the maximum diameter. Denote $ X_h $ as piecewise linear finite element space
\begin{equation*}
	X_{h}=\{v_h\in C(\bar{\Omega}): v_h|_\mathbf{T}\in \mathcal{P}^1,\  \forall \mathbf{T}\in\mathcal{T}_h,\ v_h|_{\partial \Omega}=0\},
\end{equation*}
where $\mathcal{P}^1$ denotes the set of piecewise polynomials of degree $1$ over $\mathcal{T}_h$. We denote by $(\cdot,\cdot)$ $L^{2}$ inner product and define the $ L^2 $-orthogonal projection $ P_h: L^2(\Omega)\rightarrow X_h $ by
\begin{equation*}
\begin{aligned}
&(P_hu,v_h)=(u,v_h) ~~~~\forall v_h\in X_h.\\
\end{aligned}
\end{equation*}
Then we use finite element method to discretize the operator $-\Delta$ and the finite element scheme of Eq. \eqref{eqretosol} can be written as: Find $G_{h}(t)\in X_{h}$ such that
\begin{equation}\label{eqfinsch}
	\begin{aligned}
		&(\,_0D^{\alpha,x_0}_tG_{h},v_{h})+(\nabla G_{h},\nabla v_{h}) =\\
		&\qquad\qquad\qquad(e^{-\rho U(x_{0})t}\,_0D^{\alpha}_tG(x_0,\rho,0),v_{h})+(\,_{0}I^{1-\alpha,x_{0}}_{t}f(x_0,\rho,t),v_{h})\qquad \forall v_{h}\in X_{h}.
	\end{aligned}
\end{equation}

Different from the traditional finite element scheme, we apply the $L^2$ projection $P_{h}$ on $e^{-\rho U(x_{0})t}\,_0D^{\alpha}_tG(x_0,\rho,0)$ and $\,_{0}I^{1-\alpha,x_{0}}_{t}f(x_0,\rho,t)$ instead of $G_{0}$ and $f$. Thus the errors between $P_{h}(e^{-\rho U(x_{0})t}\,_0D^{\alpha}_tG(x_0,\rho,0))$ and $e^{-\rho U(x_{0})t}\,_0D^{\alpha}_tP_{h}(G(x_0,\rho,0))$ and the ones between $P_{h}(\,_{0}I^{1-\alpha,x_{0}}_{t}f(x_0,\rho,t))$ and $\,_{0}I^{1-\alpha,x_{0}}_{t}P_{h}(f(x_0,\rho,t))$ are no longer needed to be considered, which relaxes the regularity requirement on $U(x_{0})$. See the relative error analyses in Sec. 4 below.

 Next, we present the modified high-order BDF fully discrete scheme in detail. Let the time step size $\tau=T/N$ with $N\in\mathbb{N}$, $t_i=i\tau$, $i=0,1,\ldots,N$, and $0=t_0<t_1<\cdots<t_N=T$. Introduce the generating function $\delta_{\tau,k}(\zeta)$ \cite{lubich1988-1,lubich1988-2,lubich1996} and $\beta_{\tau,k}(z)$ $(k=1,2,\ldots,6)$ as
\begin{equation}\label{betatauOk}
\delta_{\tau,k}(\zeta)=\frac{1}{\tau}\delta_{k}(\zeta)=\frac{1}{\tau}\sum_{i=1}^{k}\frac{(1-\zeta)^{i}}{i!},\quad \beta_{\tau,k}(z)=\delta_{\tau,k}(e^{-\tau\beta(z)}),
\end{equation}
where $\beta(z)$ is defined in \eqref{defbeta}.
And $\delta_{k}(\zeta)$ has the following property.
\begin{lemma}[\cite{lubich1988-1}]\label{lemdk}
	$\delta_{k}(\zeta)$ is analytic and without zeros in a neighborhood of the closed unit disc $|\zeta|\leq 1$, with the exception of a zero at $\zeta=1$, and $\delta_{k}(\zeta)$ satisfies that
	\begin{equation*}
	|\arg \delta_{k}(\zeta)|\leq \pi -\vartheta_{k}\quad {\rm for}~|\zeta|<1,
	\end{equation*}
	where $\vartheta_{k}=90^{\circ}$, $90^{\circ}$, $88^{\circ}$, $73^{\circ}$, $51^{\circ}$, $18^{\circ}$ for $k=1,\ldots,6$, respectively.
\end{lemma}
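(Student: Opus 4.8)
The plan is to establish the two assertions in turn --- first the zero structure of the polynomial $\delta_{k}$, and then the bound on $|\arg\delta_{k}|$, the latter deduced from the former via a maximum-principle argument together with a boundary computation.

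For the zero structure, observe that $\delta_{k}(\zeta)=\sum_{i=1}^{k}(1-\zeta)^{i}/i!$ is a polynomial of degree $k$, hence entire, so analyticity in a neighbourhood of $\{|\zeta|\le 1\}$ is immediate and the only content concerns the location of the zeros. A direct evaluation gives $\delta_{k}(1)=0$; moreover only the $i=1$ term contributes to the derivative at $\zeta=1$, so $\delta_{k}'(1)=-1\ne 0$ and $\zeta=1$ is a simple zero. It then suffices to show $\delta_{k}(\zeta)\ne 0$ for every $\zeta$ with $|\zeta|\le 1$, $\zeta\ne 1$: since the zero set of a nonconstant polynomial is finite, this nonvanishing automatically persists on a slightly larger disc, which is what is claimed. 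The nonvanishing on $\{|\zeta|\le 1\}\setminus\{1\}$ is precisely the strong root condition underlying the stability of BDF$k$; I would recall the classical route --- substitute $w=1-\zeta$ so that $\delta_{k}$ becomes the degree-$k$ truncation of a fixed power series in $w$, compare with that series on the image $\{|w-1|\le 1\}$ of the closed unit disc, and verify that the truncation introduces no new zeros there. For $k=1,2$ this is elementary; for $k=3,\dots,6$ it is the (nontrivial) computation carried out in \cite{lubich1988-1}, and the analogous statement is known to fail for $k\ge 7$.

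For the argument bound, fix $k$. Since the only zero of $\delta_{k}$ in $\{|\zeta|\le 1\}$ is the boundary point $\zeta=1$, the function $\delta_{k}$ is nonvanishing on the simply connected domain $\{|\zeta|<1\}$; hence a single-valued analytic branch of $\log\delta_{k}$ exists there and $\arg\delta_{k}=\mathrm{Im}\,\log\delta_{k}$ is harmonic on $\{|\zeta|<1\}$. By the maximum principle, $\sup_{|\zeta|<1}|\arg\delta_{k}(\zeta)|$ is controlled by the boundary values of $\arg\delta_{k}$ on $|\zeta|=1$, with the single exceptional point $\zeta=1$ handled by a limiting argument: there $\delta_{k}(\zeta)=-(\zeta-1)\bigl(1+O(\zeta-1)\bigr)$, so $\arg\delta_{k}(\zeta)\to\arg(1-\zeta)\in(-\pi/2,\pi/2)$ as $\zeta\to 1$ from inside the disc, contributing nothing beyond $\pi/2$. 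Hence $\sup_{|\zeta|<1}|\arg\delta_{k}(\zeta)|=\max_{\phi}|\arg\delta_{k}(e^{\mathbf{i}\phi})|$ (the maximum over $\phi$ with $e^{\mathbf{i}\phi}\ne 1$, extended by continuity), and it remains only to evaluate this one-variable maximum, which we set equal to $\pi-\vartheta_{k}$. For $k=1,2$ one checks $\mathrm{Re}\,\delta_{k}(e^{\mathbf{i}\phi})\ge 0$ on the whole circle (indeed $\mathrm{Re}\,\delta_{1}(e^{\mathbf{i}\phi})=1-\cos\phi$ and $\mathrm{Re}\,\delta_{2}(e^{\mathbf{i}\phi})=(1-\cos\phi)^{2}$), giving $\vartheta_{k}=90^{\circ}$; for $k=3,\dots,6$ one locates the critical points of $\phi\mapsto\arg\delta_{k}(e^{\mathbf{i}\phi})$ and evaluates, obtaining $88^{\circ},73^{\circ},51^{\circ},18^{\circ}$, respectively.

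The main obstacle is exactly the pair of hard facts at the heart of $A(\vartheta)$-stability of the BDF methods: (i) that for $k\le 6$ the polynomial $\delta_{k}$ has no zeros in $\{|\zeta|\le 1\}$ other than the simple zero at $\zeta=1$, and (ii) the precise values of the angles $\vartheta_{k}$ --- in particular $\vartheta_{5}=51^{\circ}$ and $\vartheta_{6}=18^{\circ}$, which admit no clean closed form and must be obtained by a careful high-precision computation of the extrema of $\arg\delta_{k}$ on the unit circle. Everything else --- the polynomial identities at $\zeta=1$, the reduction to the boundary through the maximum principle, and the control near the puncture $\zeta=1$ --- is routine. Since the statement is quoted verbatim from \cite{lubich1988-1}, in the write-up I would simply invoke that reference for (i) and (ii) and record the reduction sketched above.
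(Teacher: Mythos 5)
The paper offers no proof of this lemma --- it is quoted verbatim from \cite{lubich1988-1} --- and your sketch takes essentially the same route, correctly reducing the statement to the two facts (the strong root condition of BDF$k$ for $k\le 6$ and the numerical values of $\vartheta_k$) that must be imported from that reference, while the auxiliary steps you supply (simple zero at $\zeta=1$ via $\delta_k'(1)=-1$, the maximum principle for the bounded harmonic function $\arg\delta_k$ on the punctured disc, the limit $\arg\delta_k(\zeta)\to\arg(1-\zeta)\in(-\pi/2,\pi/2)$ near $\zeta=1$, and the explicit $k=1,2$ boundary computations) all check out. The only point worth flagging is that the paper's formula \eqref{betatauOk} writes $(1-\zeta)^i/i!$ where the classical BDF generating polynomial, to which Lubich's angles actually apply, has $(1-\zeta)^i/i$; your argument (like the lemma itself) implicitly concerns the latter, so the discrepancy is a typo in the paper rather than a gap in your proof.
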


Generally, according to the convolution quadrature \cite{jin2017,lubich1988-1,lubich1988-2,lubich1996} generated by $k$-th order BDF, the Riemann-Liouville fractional derivative with $\alpha \in (0,1)$ can be approximated by
\begin{equation*}
~_{0}D^{\alpha}_{t}\varphi(t_{n})\approx \sum_{i=0}^{n}d^{\alpha,k}_{i}\varphi^{n-i},
\end{equation*}
where $\varphi^{n}=\varphi(t_{n})$ and
\begin{equation*}
	(\delta_{\tau,k}(\zeta))^{\alpha}=\sum_{i=0}^{\infty}d^{\alpha,k}_{i}\zeta^{i}.
\end{equation*}
 Similarly, the Riemann-Liouville fractional substantial derivative can be approximated by
\begin{equation}\label{eqdisstfd}
~_{0}D^{\alpha,x_{0}}_{t}\varphi(t_{n})\approx \sum_{i=0}^{n}d^{\alpha,k}_{i}e^{-\rho U(x_{0})t_{i}}\varphi^{n-i}.
\end{equation}

 By using \eqref{eqdisstfd}, we have the following $k$-th order BDF fully discrete scheme: Find $G^{n}_{h}\in X_{h}$ such that
\begin{equation}\label{equncorrect}
	\begin{aligned}
		&\sum_{i=0}^{n-1}d^{\alpha,k}_{i}(e^{-t_{i}\rho U(x_{0})}G^{n-i}_{h} ,v_{h})+(\nabla G^{n}_{h}, \nabla v_{h})\\
		&\qquad=\sum_{i=0}^{n-1}d^{\alpha,k}_{i}(e^{-t_{n}\rho U(x_{0})}G^{0},v_{h})+\sum_{i=0}^{n-1}d^{\alpha-1,k}_{i}(e^{-t_{i}\rho U(x_{0})}f^{n-i},v_{h}),
	\end{aligned}
\end{equation}
where $f^{n}=f(t_{n})$ and $G^{0}=G_{0}(x_{0})$. In fact, for \eqref{equncorrect}, the desired $k$-th order accuracy can be reached only under the condition that the solution is regular enough.
So here, we try to modify the scheme and get a robust $k$-th order scheme for the case with nonsmooth data. First, by Taylor's expansion, we spilt $f$ into
\begin{equation}\label{eqtaylorf}
f(t)=\sum_{i=0}^{k-2}\frac{t^{i}}{i!}\partial^{i}_{t}f(0)+R_{k}(t),
\end{equation}
where
\begin{equation*}
R_{k}(t)=\frac{t^{k-1}}{(k-1)!}\partial^{k-1}_{t}f(0)+\frac{t^{k-1}}{(k-1)!}\ast\partial^{k}_{t}f(t)
\end{equation*}
and `$\ast$' denotes the convolution. To capture the regularity property of the solution at starting point, the $k$-th order BDF fully discrete scheme can be modified as: Find $G^{n}_{h}\in X_{h}$ such that

\begin{equation}\label{eqfullschemes1}
\begin{aligned}
&\sum_{i=0}^{n-1}d^{\alpha,k}_{i}(e^{-t_{i}\rho U(x_{0})}G^{n-i}_{h} ,v_{h})+(\nabla G^{n}_{h}, \nabla v_{h})-\sum_{j=1}^{k-1}d^{\alpha,k}_{n-j}a^{(k)}_{j}(e^{-t_{n}\rho U(x_{0})}G^{0},v_{h})\\
&\qquad=\sum_{i=0}^{n-1}d^{\alpha,k}_{i}(e^{-t_{n}\rho U(x_{0})}G^{0},v_{h})+\sum_{i=0}^{n-1}d^{\alpha-1,k}_{i}(e^{-t_{i}\rho U(x_{0})}f^{n-i},v_{h})\\
&\qquad\quad+\sum_{j=1}^{k-1}a^{(k)}_{j}d^{\alpha-1,k}_{n-j}(e^{-t_{n-j}\rho U(x_{0})}f^{0},v_{h})\\
&\qquad\quad+\sum_{l=1}^{k-2}\sum_{j=1}^{k-1}b^{(k)}_{l,j}\tau^{l}d^{\alpha-1,k}_{n-j}(e^{-t_{n-j}\rho U(x_{0})}\partial^{l}_{t}f(0),v_{h})
 \qquad\qquad \forall v_{h}\in X_{h},
\end{aligned}
\end{equation}
where $a^{(k)}_{j}$ and $b^{(k)}_{l,j}$ are coefficients to be determined below.
Next, introduce $A_{h}$ as
\begin{equation*}
	(A_{h}u_{h},v_{h})=(\nabla u_{h},\nabla v_{h})\qquad \forall u_{h},v_{h}\in X_{h}.
\end{equation*}
Thus \eqref{eqfullschemes1} can be expressed as
\begin{equation}\label{eqfullscheme}
\begin{aligned}
&\sum_{i=0}^{n-1}d^{\alpha,k}_{i}(e^{-t_{i}\rho U(x_{0})}G^{n-i}_{h}-P_{h}(e^{-t_{n}\rho U(x_{0})}G^{0}))+A_{h}G^{n}_{h}\\
&=\sum_{j=1}^{k-1}d^{\alpha,k}_{n-j}a^{(k)}_{j}P_{h}(e^{-t_{n}\rho U(x_{0})}G^{0})+\sum_{i=0}^{n-1}d^{\alpha-1,k}_{i}P_{h}(e^{-t_{i}\rho U(x_{0})}f^{n-i})\\
&\quad +\sum_{j=1}^{k-1}a^{(k)}_{j}d^{\alpha-1,k}_{n-j}P_{h}(e^{-t_{n-j}\rho U(x_{0})}f^{0})\\
&\quad
+\sum_{l=1}^{k-2}\sum_{j=1}^{k-1}b^{(k)}_{l,j}\tau^{l}d^{\alpha-1,k}_{n-j}P_{h}(e^{-t_{n-j}\rho U(x_{0})}\partial^{l}_{t}f(0)).
\end{aligned}
\end{equation}

\begin{theorem} \label{thmfulldis}
	The solution of fully discrete scheme \eqref{eqfullscheme} can be represented as
	\begin{equation}\label{eqfulldissol}
	\begin{aligned}
&	G^{n}_{h}= \\
&\frac{1}{2\pi \mathbf{i}}\int_{\Gamma^{\tau}_{\theta,\kappa}}e^{zt_{n}}((\beta_{\tau,k}(z))^{\alpha}+A_{h})^{-1}P_{h}\left ((\beta_{\tau,k}(z))^{\alpha-1}\mu_{k}(e^{-\beta(z)\tau})G^{0}\right )dz\\
	&+\frac{1}{2\pi \mathbf{i}}\int_{\Gamma^{\tau}_{\theta,\kappa}}e^{zt_{n}}((\beta_{\tau,k}(z))^{\alpha}+A_{h})^{-1}P_{h}\left ((\beta_{\tau,k}(z))^{\alpha-1}(\delta_{\tau,k}(e^{-z\tau}))^{-1}\mu_{k}(e^{-z\tau})f^{0}\right )dz\\
	&+\frac{1}{2\pi \mathbf{i}}\int_{\Gamma^{\tau}_{\theta,\kappa}}e^{zt_{n}}((\beta_{\tau,k}(z))^{\alpha}+A_{h})^{-1}\sum_{l=1}^{k-2}P_{h}\left ((\beta_{\tau,k}(z))^{\alpha-1}\eta_{k,l}(e^{-z\tau})\partial^{l}_{t}f(0)\right )dz\\
	&+\frac{1}{2\pi \mathbf{i}}\int_{\Gamma^{\tau}_{\theta,\kappa}}e^{zt_{n}}((\beta_{\tau,k}(z))^{\alpha}+A_{h})^{-1}P_{h}\left ((\beta_{\tau,k}(z))^{\alpha-1}\tau\sum_{n=1}^{\infty}R^{n}_{k}e^{-zt_{n}}\right )dz\\
	\end{aligned}
	\end{equation}
	with the contour $\Gamma^\tau_{\theta,\kappa}=\{z\in \mathbb{C}:\kappa\leq |z|\leq\frac{\pi}{\tau\sin(\theta)},|\arg z|=\theta\}\cup\{z\in \mathbb{C}:|z|=\kappa,|\arg z|\leq\theta\}$ and
	\begin{equation*}
	\begin{aligned}
		&\mu_{k}(\zeta)=\delta_{k}(\zeta)\left (\frac{\zeta}{1-\zeta}+\sum_{j=1}^{k-1}a_{j}^{(k)}\zeta^{j}\right ),\qquad \gamma_{l}(\zeta)=\left (\zeta \frac{d}{d\zeta}\right )^{l}\frac{1}{1-\zeta},\\
		&\eta_{k,l}(\zeta)=\left(\frac{\gamma_{l}(\zeta)}{l!}+\sum_{j=1}^{k-1}b^{(k)}_{l,j}\zeta^{j}\right)\tau^{l+1}.
	\end{aligned}
	\end{equation*}
Here $R^{n}_{k}=R_{k}(t_{n})$.
\end{theorem}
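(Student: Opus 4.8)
The plan is to derive the solution representation by applying the discrete (generating-function) Laplace transform — that is, forming the power series $\sum_{n\ge 1} G^n_h \zeta^n$ — to the fully discrete scheme \eqref{eqfullscheme}, solving the resulting algebraic equation in $X_h$, and then inverting via a Cauchy-type contour integral. First I would multiply \eqref{eqfullscheme} by $\zeta^n$ and sum over $n\ge 1$. The key computational fact is that the substantial convolution weights behave correctly under this transform: since $(\delta_{\tau,k}(\zeta))^\alpha = \sum_i d^{\alpha,k}_i \zeta^i$, the factor $e^{-t_i\rho U(x_0)} = (e^{-\tau\rho U(x_0)})^i$ turns the weighted convolution $\sum_i d^{\alpha,k}_i e^{-t_i\rho U(x_0)} G^{n-i}_h$ into $(\delta_{\tau,k}(e^{-\tau\rho U(x_0)}\zeta))^\alpha \,\widehat G_h(\zeta)$ after absorbing $e^{-t_n\rho U(x_0)} = (e^{-\tau\rho U(x_0)})^n$ into $\zeta$; evaluating along $\zeta = e^{-z\tau}$ then produces exactly $\beta_{\tau,k}(z) = \delta_{\tau,k}(e^{-\tau\beta(z)})$ because $e^{-\tau\rho U(x_0)}e^{-z\tau} = e^{-\tau\beta(z)}$. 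This is the place where the time-space coupling is handled cleanly, and it is why the scheme was written with $e^{-t_i\rho U(x_0)}$ rather than a projected variant.

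Next I would collect the four groups of terms on the right-hand side of \eqref{eqfullscheme} and identify their generating functions. The initial-data correction terms involving $a^{(k)}_j$ combine with the plain $\sum_{i} d^{\alpha,k}_i(e^{-t_n\rho U}G^0)$ term to yield, up to the factor $(\beta_{\tau,k}(z))^{-1}$, precisely $(\beta_{\tau,k}(z))^{\alpha-1}\mu_k(e^{-\beta(z)\tau})G^0$ — here one recognizes $\delta_k(\zeta)\big(\tfrac{\zeta}{1-\zeta}+\sum_j a^{(k)}_j\zeta^j\big)$ as the generating function of the sequence $\{\sum_i d^{\alpha,k}_i - \text{(correction)}\}$ restricted appropriately, which is the definition of $\mu_k$. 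For the source term I would use the Taylor splitting \eqref{eqtaylorf}: the term $f^0$ together with its $a^{(k)}_j$-correction gives the $(\delta_{\tau,k}(e^{-z\tau}))^{-1}\mu_k(e^{-z\tau})f^0$ factor; the $\tau^l\partial^l_t f(0)$ terms with coefficients $b^{(k)}_{l,j}$, combined with the corresponding monomial contributions $\sum_i d^{\alpha-1,k}_i (e^{-t_i\rho U})\tfrac{t_{n-i}^l}{l!}$ whose generating function is $(\delta_{\tau,k})^{\alpha-1}\gamma_l(\zeta)\tau^l/l!$ after using $(\zeta\tfrac{d}{d\zeta})^l\tfrac{1}{1-\zeta}=\gamma_l(\zeta)$, assemble into the $\eta_{k,l}(e^{-z\tau})\partial^l_t f(0)$ terms; and the remainder $R_k(t_n)$ contributes the last integral with its generating function $\tau\sum_{n\ge1}R^n_k e^{-zt_n}$. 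After these identifications the transformed equation reads $\big((\beta_{\tau,k}(z))^\alpha + A_h\big)\widehat G_h = P_h(\cdots)$, and solving and then inverting by $G^n_h = \frac{1}{2\pi\mathbf{i}}\oint e^{zt_n}\widehat G_h(e^{-z\tau})\,dz$ over the truncated contour $\Gamma^\tau_{\theta,\kappa}$ — valid because the integrand is $\tfrac{2\pi\mathbf{i}}{\tau}$-periodic in $z$ along vertical lines and analytic inside, by Lemma \ref{lemdk} and Lemma \ref{lemmaBeta} — yields \eqref{eqfulldissol}.

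The main obstacle I anticipate is the bookkeeping that shows the correction terms with undetermined coefficients $a^{(k)}_j$, $b^{(k)}_{l,j}$ collapse exactly into the compact factors $\mu_k$ and $\eta_{k,l}$ as defined in the statement — in particular handling the index shift between $d^{\alpha,k}_{n-j}$ appearing in the correction and $d^{\alpha,k}_i$ in the main convolution sum, and verifying that the "tail" portions of the infinite series $\tfrac{\zeta}{1-\zeta}$ and $\gamma_l(\zeta)/l!$ (beyond degree $k-1$) are precisely what the plain scheme already supplies, so that only the low-order initial contributions get modified. A secondary technical point is justifying the contour truncation and the interchange of summation and integration, which rests on the geometric decay of $\|((\beta_{\tau,k}(z))^\alpha+A_h)^{-1}\|$ and the fact that, for $|\zeta|\le 1$, $\delta_k(\zeta)$ stays in the sector guaranteed by Lemma \ref{lemdk} so that $(\beta_{\tau,k}(z))^\alpha$ avoids the spectrum of $-A_h$; both are routine once the sector condition $\theta<\pi-\vartheta_k$ is imposed.
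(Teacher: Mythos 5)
Your proposal follows essentially the same route as the paper's proof: multiply the scheme by $\zeta^{n}$, sum over $n\ge 1$, use $e^{-t_{i}\rho U(x_{0})}=(e^{-\tau\rho U(x_{0})})^{i}$ to turn the weighted convolution into $(\delta_{\tau,k}(e^{-\tau\rho U(x_{0})}\zeta))^{\alpha}$ acting on the generating function, identify the generating functions of the correction terms with $\mu_{k}$, $\eta_{k,l}$ and the remainder series, then invert by Cauchy's formula on $|\zeta|=e^{-\tau(\kappa+1)}$, substitute $\zeta=e^{-z\tau}$, and deform to $\Gamma^{\tau}_{\theta,\kappa}$. The argument is correct and matches the paper's proof in all essential steps.
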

\begin{proof}
	Multiplying $\zeta^{n}$ on both sides of $\eqref{eqfullscheme}$ and summing $n$ from $1$ to $\infty$ yield
	\begin{equation*}
	\begin{aligned}
	&\sum_{n=1}^{\infty}\sum_{i=0}^{n-1}d^{\alpha,k}_{i}e^{-t_{i}\rho U(x_{0})}G^{n-i}_{h}\zeta^{n}+\sum_{n=1}^{\infty}A_{h}G^{n}_{h}\zeta^{n}\\
	&=	\sum_{n=1}^{\infty}\sum_{i=0}^{n-1}d^{\alpha,k}_{i}P_{h}(e^{-t_{n}\rho U(x_{0})}G^{0})\zeta^{n}+\sum_{n=1}^{\infty}\sum_{j=1}^{k-1}d^{\alpha,k}_{n-j}a^{(k)}_{j}P_{h}(e^{-t_{n}\rho U(x_{0})}G^{0})\zeta^{n}\\
	&\quad+\sum_{n=1}^{\infty}\sum_{i=0}^{n-1}d^{\alpha-1,k}_{i}P_{h}(e^{-t_{i}\rho U(x_{0})}f^{n-i})\zeta^{n}
\\
	&\quad
+\sum_{n=1}^{\infty}\sum_{j=1}^{k-1}d^{\alpha-1,k}_{n-j}a^{(k)}_{j}P_{h}(e^{-t_{n-j}\rho U(x_{0})}f^{0})\zeta^{n}\\
	&\quad+\sum_{l=1}^{k-2}\sum_{j=1}^{k-1}\sum_{n=1}^{\infty}b^{(k)}_{l,j}d^{\alpha-1,k}_{n-j}P_{h}(e^{-t_{n-j}\rho U(x_{0})}\tau^{l}\partial^{l}_{t}f(0))\zeta^{n}.
	\end{aligned}
	\end{equation*}
	Using definitions of $\delta_{\tau,k}$ and doing simple calculations lead to
	\begin{equation*}
	\begin{aligned}
	&((\delta_{\tau,k}(e^{-\tau\rho U(x_{0})}\zeta))^{\alpha}+A_{h})\sum_{n=1}^{\infty}G^{n}_{h}\zeta^{n}\\
	=&P_{h}\left ((\delta_{\tau,k}(e^{-\tau\rho U(x_{0})}\zeta))^{\alpha}\left(\frac{e^{-\tau\rho U(x_{0})}\zeta}{1-e^{-\tau\rho U(x_{0})}\zeta}+\sum_{j=1}^{k-1}a^{(k)}_{j}( e^{-\tau\rho U(x_{0})}\zeta)^{j}\right)G^{0}\right )\\
	&+P_{h}\left ((\delta_{\tau,k}(e^{-\tau\rho U(x_{0})}\zeta))^{\alpha-1}\left(\frac{\zeta}{1-\zeta}+\sum_{j=1}^{k-1}a^{(k)}_{j}\zeta^{j}\right)f^{0}\right )\\
	&+\sum_{l=1}^{k-2}P_{h}\left ((\delta_{\tau,k}(e^{-\tau\rho U(x_{0})}\zeta))^{\alpha-1}\left(\frac{\gamma_{l}(\zeta)}{l!}+\sum_{j=1}^{k-1}b^{(k)}_{l,j}\zeta^{j}\right)\tau^{l}\partial^{l}_{t}f(0)\right )\\
	&+P_{h}\left ((\delta_{\tau,k}(e^{-\tau\rho U(x_{0})}\zeta))^{\alpha-1}\sum_{n=1}^{\infty}R^{n}_{k}\zeta^{n}\right ).\\
	\end{aligned}
	\end{equation*}
	According to Cauchy's integral formula, it holds that
	\begin{equation*}
	\begin{aligned}
	G^{n}_{h}=&\frac{1}{2\pi \mathbf{i}}\int_{|\zeta|=\xi_{\tau}}\zeta^{-n-1}((\delta_{\tau,k}(e^{-\tau\rho U(x_{0})}\zeta))^{\alpha}+A_{h})^{-1}\\
	&\cdot P_{h}\left ((\delta_{\tau,k}(e^{-\tau\rho U(x_{0})}\zeta))^{\alpha}\left(\frac{e^{-\tau\rho U(x_{0})}\zeta}{1-e^{-\tau\rho U(x_{0})}\zeta}+\sum_{n=1}^{k-1}a^{(k)}_{n}(e^{-\tau\rho U(x_{0})}\zeta)^{n}\right)G^{0}\right )d\zeta\\
	&+\frac{1}{2\pi \mathbf{i}}\int_{|\zeta|=\xi_{\tau}}\zeta^{-n-1}((\delta_{\tau,k}(e^{-\tau\rho U(x_{0})}\zeta))^{\alpha}+A_{h})^{-1}\\
	&\cdot P_{h}\left ((\delta_{\tau,k}(e^{-\tau\rho U(x_{0})}\zeta))^{\alpha-1}\left(\frac{\zeta}{1-\zeta}+\sum_{n=1}^{k-1}a^{(k)}_{n}\zeta^{n}\right)f^{0}\right )d\zeta\\
	&+\frac{1}{2\pi \mathbf{i}}\int_{|\zeta|=\xi_{\tau}}\zeta^{-n-1}((\delta_{\tau,k}(e^{-\tau\rho U(x_{0})}\zeta))^{\alpha}+A_{h})^{-1}\\
	&\cdot \sum_{l=1}^{k-2}P_{h}\left ((\delta_{\tau,k}(e^{-\tau\rho U(x_{0})}\zeta))^{\alpha-1}\left(\frac{\gamma_{l}(\zeta)}{l!}+\sum_{j=1}^{k-1}b^{(k)}_{l,j}\zeta^{j}\right)\tau^{l}\partial^{l}_{t}f(0)\right )d\zeta\\
	&+\frac{1}{2\pi \mathbf{i}}\int_{|\zeta|=\xi_{\tau}}\zeta^{-n-1}((\delta_{\tau,k}(e^{-\tau\rho U(x_{0})}\zeta))^{\alpha}+A_{h})^{-1}
	\\
	&\cdot
	P_{h}\left ((\delta_{\tau,k}(e^{-\tau\rho U(x_{0})}\zeta))^{\alpha-1}\sum_{n=1}^{\infty}R^{n}_{k}\zeta^{n}\right )d\zeta,\\
	\end{aligned}
	\end{equation*}
	where $\xi_\tau=e^{-\tau(\kappa+1)}$. Taking $\zeta=e^{-z\tau}$ and deforming $\Gamma^\tau=\{z=\kappa+1+\mathbf{i}y:y\in\mathbb{R}~{\rm and}~|y|\leq \pi/\tau\}$ to
	$\Gamma^\tau_{\theta,\kappa}$ imply the desired results.
\end{proof}

To construct the modification criteria, using \eqref{eqtaylorf}, we rewrite the solution of Eq. \eqref{eqretosol} as
\begin{equation}\label{eqsolresp1}
\begin{aligned}
\tilde{G}=&((\beta(z))^{\alpha}+A)^{-1}((\beta(z))^{\alpha-1}G_{0})\\
&+\sum_{i=0}^{k-2}((\beta(z))^{\alpha}+A)^{-1}((\beta(z))^{\alpha-1}z^{-i-1}\partial^{i}_{t}f(0))\\
&+((\beta(z))^{\alpha}+A)^{-1}((\beta(z))^{\alpha-1}\tilde{R}_{k}).
\end{aligned}
\end{equation}
By comparing \eqref{eqfulldissol} and \eqref{eqsolresp1}, to guarantee $\mathcal{O}(\tau^{k})$ in time and $\mathcal{O}(h^{2})$ in space accuracies at the same time, 
the following estimates are expected, i.e.,
for $z\in \Gamma_{\theta,\kappa}^{\tau}$,
\begin{equation}\label{equreqest}
\begin{aligned}
&\|\mu_{k}(e^{-z\tau})-1\|\leq C|z|^{k}\tau^{k},\\
&\left \|\eta_{k,l}(e^{-z\tau})-\frac{1}{z^{l+1}}\right \|\leq C|z|^{k-l-1}\tau^{k},\quad l=1,2,\ldots ,k-2,\\
&\|\beta_{\tau,k}(z)-\beta(z)\|\leq C|z|^{k+1}\tau^{k},\\
&\|((\beta_{\tau,k}(z))^{\alpha}+A)^{-1}-((\beta_{\tau,k}(z))^{\alpha}+A_h)^{-1} P_h\|\leq Ch^{2}.
\end{aligned}
\end{equation}
It's easy to check that the last two estimates hold automatically; see Lemmas \ref{lemmabetatauOk} and \ref{lemeroper}. For the first two estimates, similar to the derivations of coefficients in Section 2.2 of  \cite{jin2017}, the appropriate choices of  $a^{(k)}_{j}$ and $b^{(k)}_{l,j}$ in \eqref{equreqest} make them hold;  see Tables \ref{tab:defank} and \ref{tabblnk}.

\renewcommand\arraystretch{1.2}
\begin{table}[htbp]
	\caption{Value of $a^{(k)}_{j}$}
	\begin{center}
	\begin{tabular}{|c|ccccc|}
		\hline
		Order & $a^{(k)}_{1}$ & $a^{(k)}_{2}$ & $a^{(k)}_{3}$ & $a^{(k)}_{4}$&$a^{(k)}_{5}$ \\
		\hline
		$k=2$ & $\frac{1}{2}$ &  &  & & \\
		\hline
		$k=3$ & $\frac{11}{12}$ & $-\frac{5}{12}$ &  & &  \\
		\hline
		$k=4$ & $\frac{31}{24}$ & $-\frac{7}{6}$ & $\frac{3}{8}$ &  &\\
		\hline
		$k=5$ & $\frac{1181}{720}$ & $-\frac{177}{80}$ & $\frac{341}{240}$ & $-\frac{251}{270}$ &\\
		\hline
		$k=6$ & $\frac{2837}{1440}$ & $-\frac{2543}{720}$ & $\frac{17}{5}$ & $-\frac{1201}{720}$&$\frac{95}{288}$ \\
		\hline
	\end{tabular}
\end{center}
	\label{tab:defank}
\end{table}

\begin{table}[htbp]
	\caption{Value of $b^{(k)}_{l,j}$}
	\begin{center}
	\begin{tabular}{|c|c|ccccc|}
		\hline
		Order &  & $b^{(k)}_{l,1}$ & $b^{(k)}_{l,2}$ & $b^{(k)}_{l,3}$ & $b^{(k)}_{l,4}$ & $b^{(k)}_{l,5}$ \\
		\hline
		$k=3$ & $l=1$ & $\frac{1}{12}$ & $0$ &  &  &  \\
		\hline
		$k=4$ & $l=1$ & $\frac{1}{6}$ & $-\frac{1}{12}$ & $0$ &  &  \\
		& $l=2$ & $0$ & $0$ & $0$ &  &  \\
		\hline
		$k=5$ & $l=1$ & $\frac{59}{240}$ & $-\frac{29}{120}$ & $\frac{19}{240}$ & $0$ &  \\
		& $l=2$ & $\frac{1}{240}$ & $-\frac{1}{240}$ & $0$ & $0$ &  \\
		& $l=3$ & $-\frac{1}{720}$ & $0$ & $0$ & $0$ &  \\
		\hline
		$k=6$ & $l=1$ & $\frac{77}{240}$ & $-\frac{7}{15}$ & $\frac{73}{240}$ & $-\frac{3}{40}$ & $0$ \\
		& $l=2$ & $\frac{1}{96}$ & $-\frac{1}{60}$ & $\frac{1}{160}$ & $0$ & $0$ \\
		& $l=3$ & $-\frac{1}{360}$ & $\frac{1}{720}$ & $0$ & $0$ & $0$ \\
		& $l=4$ & $0$ & $0$ & $0$ & $0$ & $0$ \\
		\hline
	\end{tabular}
\end{center}
	\label{tabblnk}
\end{table}

\renewcommand\arraystretch{1}

\begin{remark}
	Similar to the proof of Theorem \ref{thmfulldis},
	the solution of \eqref{equncorrect} with $f=0$ can be represented by
	\begin{equation}\label{equncorsol}
	\begin{aligned}
	G^{n}_{h}=&\frac{1}{2\pi \mathbf{i}}\int_{\Gamma^{\tau}_{\theta,\kappa}}e^{zt_{n}}((\beta_{\tau,k}(z))^{\alpha}+A_{h})^{-1}P_{h}\left ((\beta_{\tau,k}(z))^{\alpha-1}e^{-\beta(z)\tau}G^{0}\right )dz.
	\end{aligned}
	\end{equation}
	Motivated by the idea provided in \cite{jin2017}, a general way is to try to rewrite \eqref{equncorsol} as
	 \begin{equation}\label{equncorsolneq}
	 \begin{aligned}
	 ((\beta_{\tau,k}(z))^{\alpha}+A_{h})\sum_{n=1}^{\infty}(G^{n}_{h}-P_{h}e^{-\rho U(x_0)t_n}G^{0})e^{-zt_{n}}=&A_{h}P_{h}\left ((\beta_{\tau,k}(z))^{-1}e^{-\beta(z)\tau}G^{0}\right )
	 \end{aligned}
	 \end{equation}
	 and get $k$-th order scheme by adding some suitable terms to make
	 \begin{equation*}
	 	\left \|\beta_{\tau,k}(z)\left (\frac{e^{-\beta(z)\tau}}{1-e^{-\beta(z)\tau}}+\sum_{j=1}^{k-1}a^{(k)}_{j}e^{-\beta(z)t_{j}}\right )-1\right \|\leq C|z|^{k+1}\tau^{k}.
	 \end{equation*}
	  Thus the correction scheme can be got by using Cauchy's integral formula, which only modifies the $k-1$ starting steps. But \eqref{equncorsolneq} holds only when $U(x_{0})$ is a constant. Here, our modified scheme \eqref{eqfulldissol} can be constructed by modifying
	  \begin{equation}\label{eqcorsol}
	  	((\beta_{\tau,k}(z))^{\alpha}+A_{h})\sum_{n=1}^{\infty}G^{n}_{h}=P_{h}((\beta_{\tau,k}(z))^{\alpha-1}e^{-z\tau}G^{0}).
	  \end{equation}
	  Caused by the term $(\beta_{\tau,k}(z))^{\alpha-1}$ in the right hand of Eq. \eqref{eqcorsol}, we need to modify numerical scheme in each step to keep $O(\tau^k)$ convergence in time.
\end{remark}
\section{Error estimates}
In this section, we first provide the temporal error estimates for the modified high-order BDF scheme. Then the optimal spatial convergence is obtained in $L^{2}$- and $H^{1}$-norm.

Consider the time semi-discrete scheme
\begin{equation}\label{eqsemisch}
\left\{
	\begin{aligned}
	&\sum_{i=0}^{n-1}d^{\alpha,k}_{i}(e^{-t_{i}\rho U(x_{0})}G^{n-i}-(e^{-t_{n}\rho U(x_{0})}G^{0}))+AG^{n}\\
	&\qquad=\sum_{j=1}^{k-1}d^{\alpha,k}_{n-j}a^{(k)}_{j}(e^{-t_{n}\rho U(x_{0})}G^{0})
+\sum_{i=0}^{n-1}d^{\alpha-1,k}_{i}(e^{-t_{i}\rho U(x_{0})}f^{n-i})\\
	&\qquad\quad +\sum_{j=1}^{k-1}a^{(k)}_{j}d^{\alpha-1,k}_{n-j}(e^{-t_{n-j}\rho U(x_{0})}f^{0})
 \\
&\qquad\quad +\sum_{l=1}^{k-2}\sum_{j=1}^{k-1}b^{(k)}_{l,j}\tau^{l}d^{\alpha-1,k}_{n-j}(e^{-t_{n-j}\rho U(x_{0})}\partial^{l}_{t}f(0)),\quad in~ \Omega,\\
	&G^{0}=G_0(x_0),\qquad in ~\Omega,\\
	&G^{0}=0, \qquad on~ \partial\Omega.
	\end{aligned}\right.
\end{equation}
Using the same procedure as in the proof of Theorem \ref{thmfulldis},  the solution of Eq. \eqref{eqsemisch} can be expressed as
\begin{equation}\label{eqtimesemidissol}
\begin{aligned}
G^{n}=&\frac{1}{2\pi \mathbf{i}}\int_{\Gamma^{\tau}_{\theta,\kappa}}e^{zt_{n}}((\beta_{\tau,k}(z))^{\alpha}+A)^{-1}\left ((\beta_{\tau,k}(z))^{\alpha-1}\mu_{k}(e^{-\beta(z)\tau})G^{0}\right )dz\\
&+\frac{1}{2\pi \mathbf{i}}\int_{\Gamma^{\tau}_{\theta,\kappa}}e^{zt_{n}}((\beta_{\tau,k}(z))^{\alpha}+A)^{-1}\left ((\beta_{\tau,k}(z))^{\alpha-1}(\delta_{\tau,k}(e^{-z\tau}))^{-1}\mu_{k}(e^{-z\tau})f^{0}\right )dz\\
&+\frac{1}{2\pi \mathbf{i}}\int_{\Gamma^{\tau}_{\theta,\kappa}}e^{zt_{n}}((\beta_{\tau,k}(z))^{\alpha}+A)^{-1}\sum_{l=1}^{k-2}\left ((\beta_{\tau,k}(z))^{\alpha-1}\eta_{k,l}(e^{-z\tau})\partial^{l}_{t}f(0)\right )dz\\
&+\frac{1}{2\pi \mathbf{i}}\int_{\Gamma^{\tau}_{\theta,\kappa}}e^{zt_{n}}((\beta_{\tau,k}(z))^{\alpha}+A)^{-1}\left ((\beta_{\tau,k}(z))^{\alpha-1}\tau\sum_{n=1}^{\infty}R^{n}_{k}e^{-zt_{n}}\right )dz.
\end{aligned}
\end{equation}
Next, we give two lemmas about $\beta_{\tau,k}(z)$, $k=1,\ldots,6$.
\begin{lemma}\label{lemmabetatauOk}
	Let $\beta_{\tau,k}(z)$ be defined in \eqref{betatauOk} and $U(x_0)$ bounded in $\bar{\Omega}$.  		Denote  $\Sigma^{\tau}_{\theta,\kappa}=\{z\in\mathbb{C}:|z|\geq\kappa,|\arg z|\leq \theta, |Im(z)|\leq \frac{\pi}{\tau},Re(z)\leq \kappa+1\}$ with $Im(z)$ being the imaginary part of $z$ and $Re(z)$ the real part of $z$.   By choosing $\theta\in(\frac{\pi}{2},\pi)$ sufficiently close to $\frac{\pi}{2}$ and $\kappa>0$ large enough $($depending on $|\rho|\|U(x_0)\|_{L^{\infty}(\bar{\Omega})}$$)$, there exists a positive constant $\tau_{*}$ $($depending on $\theta$ and $\kappa$$)$ such that the following estimates hold when $\tau\leq \tau_{*}$:
	\begin{enumerate}[(1)]
		\item For all $x_{0}\in \bar{\Omega}$ and ${{z}}\in \Sigma^{\tau}_{\theta,\kappa}$, we have $\beta_{\tau,k}({{z}}) \in \Sigma_{\pi-\vartheta_{k}+\epsilon,C\kappa}$, where $\vartheta_{k}$ is given in Lemma \ref{lemdk}, and
		\begin{equation*}
		C_1|{{z}}|\leq|\beta_{\tau,k}({{z}})|\leq C_2|{{z}}|.
		\end{equation*}
		
		\item The operator $((\beta_{\tau,k}({{z}}))^\alpha+A)^{-1}:L^2(\Omega)\rightarrow L^2(\Omega)$ is well-defined, bounded, and analytic with respect to $z\in \Sigma^{\tau}_{\theta,\kappa}$, satisfying
		\begin{equation*}
		\|A((\beta_{\tau,k}(z))^\alpha+A)^{-1}\|\leq C~~~~~\forall {{z}} \in \Sigma^{\tau}_{\theta,\kappa},
		\end{equation*}
		\begin{equation*}
		\|((\beta_{\tau,k}({{z}}))^\alpha+A)^{-1}\|\leq C|{{z}}|^{-\alpha}~~~~~\forall{{z}} \in \Sigma^{\tau}_{\theta,\kappa}.
		\end{equation*}
    \item For all $x_{0}\in \bar{\Omega}$ and the real number $\gamma$, there holds
		\begin{equation*}
		|(\beta(z))^{\gamma}-(\beta_{\tau,k}(z))^{\gamma}|\leq C\tau^{k}|z|^{\gamma+k}\quad \forall z\in\Gamma^{\tau}_{\theta,\kappa},
		\end{equation*}
		where $\beta(z)$ is defined in \eqref{defbeta}.
	\end{enumerate}
\end{lemma}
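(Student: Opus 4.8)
The plan is to reduce all three parts to a study of $w:=e^{-\tau\beta(z)}$ and of $\delta_{k}(w)$, since $\beta_{\tau,k}(z)=\frac{1}{\tau}\delta_{k}(w)$. First I would fix $\theta\in(\pi/2,\pi)$ close to $\pi/2$ and $\kappa$ large, depending only on $|\rho|\,\|U(x_{0})\|_{L^{\infty}(\bar\Omega)}$ (and on $\epsilon$); since $\beta(z)-z=\rho U(x_{0})$ is bounded, this can be arranged so that Lemma \ref{lemmaBeta} holds in the sharper form $\beta(z)\in\Sigma_{\pi/2+\epsilon,\kappa/2}$, with $C_{1}|z|\le|\beta(z)|\le C_{2}|z|$ for all $x_{0}\in\bar\Omega$ and $z\in\Sigma^{\tau}_{\theta,\kappa}$ (all constants uniform in $x_{0}$). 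The key elementary observations are then: $\beta(z)=z+\rho U(x_{0})$ is a real translation, so $\mathrm{Im}(\tau\beta(z))=\tau\,\mathrm{Im}(z)\in[-\pi,\pi]$ on $\Sigma^{\tau}_{\theta,\kappa}$ and $w$ never winds around the origin; and $|\mathrm{Re}(\tau\beta(z))|$ is uniformly small on $\Sigma^{\tau}_{\theta,\kappa}$ (the bounds $\mathrm{Re}(z)\le\kappa+1$, $|\mathrm{Im}(z)|\le\pi/\tau$ and $\theta$ close to $\pi/2$ force $\tau|\mathrm{Re}(z)|$ small, and $\tau\le\tau_{*}$ finishes it). Consequently $w$ lies in a thin annular neighbourhood of the unit circle whose width shrinks with $\tau_{*}$; the constant $\tau_{*}$, depending on $\theta$ and $\kappa$, is chosen small along the way.

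For Part (1) I would split $\Sigma^{\tau}_{\theta,\kappa}$ according to $\tau|z|\le c$ or $\tau|z|\ge c$ with $c$ a small fixed number. When $\tau|z|\le c$: the $k$-th order consistency $\delta_{k}(e^{-s})=s+O(s^{k+1})$ yields $\beta_{\tau,k}(z)=\beta(z)(1+O((\tau\beta(z))^{k}))$, so $|\arg\beta_{\tau,k}(z)|\le|\arg\beta(z)|+C(\tau|z|)^{k}<\pi-\vartheta_{k}+\epsilon$ (using $\vartheta_{k}\le 90^{\circ}$ and choosing $\theta,c,\tau_{*}$ small), and $|\beta_{\tau,k}(z)|$ is comparable to $|\beta(z)|$, hence to $|z|$. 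When $\tau|z|\ge c$: then $|z|$ is large, the constraint $\mathrm{Re}(z)\le\kappa+1$ forces $|\arg z|$ close to $\pi/2$, hence $|\arg w|=\tau|\mathrm{Im}(z)|$ is bounded below (while $\le\pi$), and a short estimate bounds $|1-w|$ below, so $w$ stays in a compact subset of the annular neighbourhood lying away from $\zeta=1$; on the arc of the unit circle away from $1$, Lemma \ref{lemdk} gives $\delta_{k}(w)\neq0$ and $|\arg\delta_{k}(w)|\le\pi-\vartheta_{k}$, and by uniform continuity this persists with an extra $+\epsilon$ on the thin annulus once $\tau_{*}$ is small, while $|\delta_{k}(w)|$ stays between two positive constants, so $|\beta_{\tau,k}(z)|=|\delta_{k}(w)|/\tau$ is comparable to $1/\tau$, hence to $|z|$ (since here $c\le\tau|z|\le\pi/\sin\theta$). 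Combining the two cases and using $|\beta_{\tau,k}(z)|\ge C_{1}|z|\ge C_{1}\kappa$ gives $\beta_{\tau,k}(z)\in\Sigma_{\pi-\vartheta_{k}+\epsilon,C\kappa}$ together with the two-sided bound.

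Part (2) follows by the standard sectorial-operator argument: with $\epsilon$ small, $\pi-\vartheta_{k}+\epsilon<\pi$ since $\vartheta_{k}>0$, so $(\beta_{\tau,k}(z))^{\alpha}$ is well defined and lies in a sector of half-angle $\alpha(\pi-\vartheta_{k}+\epsilon)<\pi$; as $A=-\Delta$ with zero Dirichlet data is self-adjoint positive definite, the distance estimate used in Lemma \ref{lemmaBeta}(2) gives $\|((\beta_{\tau,k}(z))^{\alpha}+A)^{-1}\|\le C|\beta_{\tau,k}(z)|^{-\alpha}\le C|z|^{-\alpha}$ and $\|A((\beta_{\tau,k}(z))^{\alpha}+A)^{-1}\|\le C$, while analyticity in $z$ is immediate because $z\mapsto\beta_{\tau,k}(z)$ is entire and, by Part (1), takes values in $\Sigma_{\pi-\vartheta_{k}+\epsilon}\subset\mathbb{C}\setminus(-\infty,0]$. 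For Part (3) I would again split $\Gamma^{\tau}_{\theta,\kappa}$ at $\tau|z|=c$: for $\tau|z|\le c$, consistency gives $|\beta_{\tau,k}(z)-\beta(z)|\le C\tau^{k}|\beta(z)|^{k+1}\le C\tau^{k}|z|^{k+1}$, so writing $\beta_{\tau,k}(z)=\beta(z)(1+\varepsilon_{1})$ with $|\varepsilon_{1}|\le C(\tau|\beta(z)|)^{k}\le 1/2$ and using that $u\mapsto u^{\gamma}$ is Lipschitz near $1$, $|(\beta(z))^{\gamma}-(\beta_{\tau,k}(z))^{\gamma}|\le C|\beta(z)|^{\gamma}|\varepsilon_{1}|\le C\tau^{k}|z|^{\gamma+k}$; for $\tau|z|\ge c$, Part (1) makes both $|\beta(z)|^{\gamma}$ and $|\beta_{\tau,k}(z)|^{\gamma}$ at most $C|z|^{\gamma}\le C|z|^{\gamma}(\tau|z|/c)^{k}=C'\tau^{k}|z|^{\gamma+k}$, and the triangle inequality finishes it.

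The hard part will be the regime $\tau|z|\ge c$ in Part (1): Lemma \ref{lemdk} controls $\arg\delta_{k}$ only on the \emph{open} unit disc, whereas $w=e^{-\tau\beta(z)}$ genuinely leaves the closed disc because $\mathrm{Re}\,\beta(z)$ can be slightly negative, so one must upgrade Lemma \ref{lemdk}, uniformly in $k=1,\dots,6$, to a thin annular neighbourhood of the unit circle, and, crucially, check that $w$ remains bounded away from the singularity $\zeta=1$ exactly on the part of $\Sigma^{\tau}_{\theta,\kappa}$ where $\tau|z|$ is bounded below — which is precisely where the truncation $|\mathrm{Im}(z)|\le\pi/\tau$ in $\Sigma^{\tau}_{\theta,\kappa}$ and the choice of $\theta$ close to $\pi/2$ enter.
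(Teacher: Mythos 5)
Your argument is correct in substance, but for part (1) it takes a genuinely different route from the paper. The paper does not work directly on the annulus: for the modulus bound it quotes a classical result that $\delta_k(\zeta)/(1-\zeta)$ is bounded above and below near the unit circle and combines this with the known two-sided bound $C_1|z|\le|\delta_{\tau,1}(e^{-\tau\beta(z)})|\le C_2|z|$ for the first-order symbol $(1-e^{-\tau\beta(z)})/\tau$; for the sector condition it proves a separate perturbation lemma (Lemma \ref{lemO200}) that starts from the known inclusion $\delta_{\tau,k}(e^{-z\tau})\in\Sigma_{\pi-\vartheta_k+\epsilon/2}$ for the pure BDF symbol and shows, via a mean-value estimate on $\delta_{\tau,k}'$, that inserting the factor $e^{-\tau\rho U(x_0)}$ changes $\arg\delta_{\tau,k}$ by at most $CL/\kappa$, which is absorbed into $\epsilon$ by taking $\kappa$ large. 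Your two-regime split ($\tau|z|\le c$ via the consistency expansion, $\tau|z|\ge c$ via compactness and continuity of $\arg\delta_k$ on a thin annulus bounded away from $\zeta=1$) is more self-contained and correctly isolates the genuinely delicate point --- that Lemma \ref{lemdk} controls the argument only inside the disc while $e^{-\tau\beta(z)}$ can leave it --- whereas the paper's perturbation argument gets uniformity in $x_0$ almost for free but leans on two external results. Parts (2) and (3) match the paper essentially verbatim (the paper also splits part (3), at $\tau|\beta(z)|=1/2$). One slip to correct: $\rho U(x_0)$ is \emph{not} a real translation ($\rho$ is complex in general, e.g.\ $\rho=-1+\pi\mathbf{i}$ in the experiments), so $\mathrm{Im}(\tau\beta(z))=\tau\,\mathrm{Im}(z)+\tau\,\mathrm{Im}(\rho U(x_0))$ rather than $\tau\,\mathrm{Im}(z)$; since $|\mathrm{Im}(\rho U(x_0))|\le L$ the extra term is $O(\tau L)$ and every subsequent estimate tolerates it, but the claim as written is false, and it is precisely to handle this that the paper's Lemma \ref{lemO200} treats the real and imaginary parts of $\rho U(x_0)$ as two separate perturbations.
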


\begin{proof}
	According to \cite{creedon:1975}, we have $\frac{\delta_{k}(\zeta)}{1-\zeta}\neq 0$ in the neighborhood of the unit circle. By choosing $\kappa\geq 8|\rho|\|U(x_{0})\|_{L^{\infty}(\bar{\Omega})}$, $\theta$ sufficiently close to $\frac{\pi}{2}$ and $\tau\leq \frac{\pi}{\kappa+1}$, we have that    $e^{-\tau\beta(z)}$ lies in the neighborhood of the unit circle and there exist two positive constants $C_1$ and $C_2$ such that
	\begin{equation}\label{eqboundz}
		C_{1}\leq \left |\frac{\delta_{k}(e^{-\tau\beta(z)})}{1-e^{-\tau\beta(z)}}\right |\leq C_{2} \quad\forall z\in  \Sigma^{\tau}_{\theta,\kappa}.
	\end{equation}
	From \cite{DengLiQianWang:18}, there holds
	\begin{equation*}
		C_{1}|z|\leq |\delta_{\tau,1}(e^{-\tau\beta(z)})|\leq C_{2}|z|,
	\end{equation*}
	which leads to
	\begin{equation*}
	C_{1}|z|\leq |\delta_{\tau,k}(e^{-\tau\beta(z)})|\leq C_{2}|z|.
	\end{equation*}
Combining  Lemma \ref{lemO200}, one has
	\begin{equation*}
	\beta_{\tau,k}(z)\in\Sigma_{\pi-\vartheta_{k}+\epsilon},
	\end{equation*}
	which yields the second conclusion by using $|\beta_{\tau,k}(z)|\geq C|z|$ and the resolvent estimate \cite{Jin2016}.
	
	As for the third conclusion, there holds
	\begin{equation*}
	\begin{aligned}
	&|(\beta(z))^{\gamma}-(\beta_{\tau,k}(z))^{\gamma}|\\
	=&\left |(\beta(z))^{\gamma}-\left(\beta(z)+\mathcal{O}(\tau^k(\beta(z))^{k+1})\right )^{\gamma}\right |\\
	=&|(\beta(z))^\gamma|\left |1-\left (1+\mathcal{O}(\tau^k(\beta(z))^{k})\right )^{\gamma}\right |.
	\end{aligned}
	\end{equation*}
	If $\tau|\beta(z)|\leq1/2$, we obtain
	\begin{equation*}
	|(\beta(z))^{\gamma}-(\beta_{\tau,k}(z))^{\gamma}|\leq|\beta(z)|^{\gamma}C\tau^k|\beta(z)|^{k}= C\tau^{k}|\beta(z)|^{\gamma+k}.
	\end{equation*}
	As for $\tau|\beta(z)|>1/2$, we have
	\begin{equation*}
	\begin{aligned}
	&\tau|z|\geq C\tau|\beta_{\tau,k}(z)|\geq C \qquad \forall z\in\Gamma^{\tau}_{\theta,\kappa},\\
	&|(\beta(z))^{\gamma}-(\beta_{\tau,k}(z))^{\gamma}|\leq C|z|^{\gamma}\leq C\tau^{k}|z|^{\gamma+k} \qquad \forall z\in\Gamma^{\tau}_{\theta,\kappa}.\\
	\end{aligned}
	\end{equation*}
	Thus the third conclusion is obtained.
\end{proof}

\begin{lemma}\label{lemO200}
	Let $\delta_{\tau,k}(e^{-z\tau})$ be defined in \eqref{betatauOk}, $U(x_{0})$ bounded in $\bar{\Omega}$,  and $L=|{{\rho}}|\|U({{x_{0}}})\|_{L^{\infty}(\bar{\Omega})}$. There exist positive constants $\theta_{0}\in\left (\frac{\pi}{2},\frac{9\pi}{16}\right )$ and $\tau_0$ such that if $\theta\in\left(\frac{\pi}{2},\theta_0\right )$ and $\tau\in (0,\tau_0]$, then
	\begin{equation}\label{equlem00O2con}
	\begin{aligned}
	\delta_{\tau,k}(e^{-\beta(z)\tau})\in \Sigma_{\pi-\vartheta_{k}+\epsilon} \quad  ~~\forall z\in \Gamma^{\tau}_{\theta,\kappa}{~~\rm and~~}\forall x_{0}\in \bar{\Omega}.
	\end{aligned}
	\end{equation}
\end{lemma}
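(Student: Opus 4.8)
The plan is to reduce \eqref{equlem00O2con} to controlling where the point $\zeta:=e^{-\beta(z)\tau}$ lies and then to read off the conclusion from the mapping properties of $\delta_k$. I would use three ingredients: Lemma~\ref{lemdk}, which bounds $\arg\delta_k$ on the \emph{open} unit disc; the factorization $\delta_k(\zeta)=(1-\zeta)h_k(\zeta)$, where $h_k(\zeta)=\sum_{i=1}^{k}(1-\zeta)^{i-1}/i!$ is a polynomial with $h_k(1)=1$, together with the nonvanishing of $\delta_k(\zeta)/(1-\zeta)$ near $\{|\zeta|\le1\}$ supplied by \cite{creedon:1975} (so $\delta_k$ is analytic in a neighborhood of $\{|\zeta|\le1\}$ with there the single simple zero $\zeta=1$); and the elementary bound $|\arg\beta(z)-\arg z|\le\arcsin(L/|z|)\le\arcsin(L/\kappa)$, valid because $\beta(z)=z(1+\rho U(x_0)/z)$ and $|z|\ge\kappa$. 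I would fix the constants in the order: a small $\rho_0>0$ first, then a large $\kappa$ (depending on $L$ and $\epsilon$, e.g.\ $\kappa\ge L/\sin(\epsilon/6)$), then $\theta_0\in(\pi/2,\min\{9\pi/16,\,\pi/2+\epsilon/2\})$, and finally a small $\tau_0>0$.

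First I would localize $\zeta$. Since $\rho U(x_0)$ is real, $|\zeta|=e^{-\tau\mathrm{Re}\,\beta(z)}$ and $|\arg\zeta|=\tau|\mathrm{Im}\,z|$. On the rays $z=re^{\pm\mathbf{i}\theta}$ of $\Gamma^\tau_{\theta,\kappa}$ one has $\mathrm{Re}\,z=r\cos\theta\le0$ and $\tau r\le\pi/\sin\theta$, so $-\tau\mathrm{Re}\,z\le\pi|\cot\theta|$ and $\tau|\mathrm{Im}\,z|=\tau r\sin\theta\le\pi$; on the arc $|z|=\kappa$ one has $-\tau\mathrm{Re}\,z\le\tau\kappa$ and $\tau|\mathrm{Im}\,z|\le\tau\kappa$. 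Hence $|\zeta|\le1+\delta_0$ with $\delta_0:=\exp\big(\max\{\pi|\cot\theta|,\tau\kappa\}+\tau L\big)-1$, and $|\arg\zeta|\le\pi$ provided $\tau\le\pi/\kappa$; the key feature is $\delta_0\to0$ as $\theta\to\pi/2^{+}$ and $\tau\to0^{+}$. Thus $\zeta$ stays in the closed disc of radius $1+\delta_0$ (which lies in the analyticity neighborhood once $\delta_0$ is small), $\zeta\neq1$ because $|\mathrm{Im}\,w|\le\pi<2\pi$ and $|z|\ge\kappa>L$ preclude $w=\beta(z)\tau\in2\pi\mathbf{i}\mathbb{Z}$, and $e^{-w}=\zeta$ with $|\mathrm{Im}\,w|\le\pi$.

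Then I would split according to the distance of $\zeta$ to $1$. If $|1-\zeta|\ge\rho_0$: having chosen $\theta_0,\tau_0$ so that $\delta_0<\rho_0$, the set $K:=\{|\zeta|\le1+\delta_0\}\setminus\{|\zeta-1|<\rho_0\}$ is compact and simply connected, avoids the only zero of $\delta_k$ in the neighborhood, and hence carries a single-valued, uniformly continuous branch of $\arg\delta_k$; Lemma~\ref{lemdk} gives $|\arg\delta_k|\le\pi-\vartheta_k$ on $K\cap\{|\zeta|\le1\}$, and since each point of $K$ lies within $\delta_0$ of $\{|\zeta|\le1\}$, taking $\delta_0$ small enough (i.e.\ $\theta$ closer to $\pi/2$ and $\tau$ smaller) upgrades this to $|\arg\delta_k(\zeta)|\le\pi-\vartheta_k+\epsilon$ on all of $K$. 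If instead $|1-\zeta|<\rho_0$: write $\arg\delta_k(\zeta)=\arg(1-\zeta)+\arg h_k(\zeta)$, where $|\arg h_k(\zeta)|\le\epsilon/6$ once $\rho_0$ is small since $h_k(1)=1$; by the localization and Lipschitz continuity of the principal logarithm at $1$, the constraint $|1-\zeta|<\rho_0$ (together with $|\mathrm{Im}\,w|\le\pi$, which rules out the far preimages) forces $|w|\le C\rho_0$, and writing $1-\zeta=1-e^{-w}=w\,g(w)$ with $g$ entire, $g(0)=1$, nonvanishing for $|w|<2\pi$, a further smallness of $\rho_0$ yields $|\arg g(w)|\le\epsilon/6$, so $\arg(1-\zeta)=\arg\beta(z)+\arg g(w)$. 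Using $\kappa\ge L/\sin(\epsilon/6)$ and the elementary bound, $|\arg\beta(z)|\le\theta+\arcsin(L/\kappa)\le\theta+\epsilon/6$, hence $|\arg\delta_k(\zeta)|\le\theta+\epsilon/2\le\pi-\vartheta_k+\epsilon$, the last step using $\vartheta_k\le\pi/2$ and $\theta<\theta_0\le\pi/2+\epsilon/2$. The two cases together give \eqref{equlem00O2con}.

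I expect the second case to be the genuine obstacle. Away from $\zeta=1$ everything is a routine perturbation of Lemma~\ref{lemdk}, but at the zero $\zeta=1$ of $\delta_k$ there is no a priori argument bound, so one must exploit that the \emph{truncated} contour $\Gamma^\tau_{\theta,\kappa}$ keeps $\zeta=e^{-\beta(z)\tau}$ from leaving the unit disc in a harmful direction and makes $\arg(1-\zeta)\approx\arg\beta(z)\approx\arg z\in[-\theta,\theta]$; keeping $\theta$ (plus a small error) below $\pi-\vartheta_k+\epsilon$ is exactly what forces $\theta_0$ down to within $\epsilon/2$ of $\pi/2$, and this is tight precisely when $k=1,2$, where $\pi-\vartheta_k=\pi/2$.
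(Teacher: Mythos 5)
Your argument is correct in substance, but it proves the lemma by a genuinely different route than the paper. The paper does not analyze $\delta_k$ near its zero at all: it starts from the known inclusion $\delta_{\tau,k}(e^{-z\tau})\in\Sigma_{\pi-\vartheta_k+\epsilon/2}$ for $z\in\Gamma^{\tau}_{\theta,\kappa}$ (quoted from \cite{jin2017}) and treats $e^{-\beta(z)\tau}=e^{-\rho U(x_0)\tau}e^{-z\tau}$ as a multiplicative perturbation of $e^{-z\tau}$; a mean-value estimate with $|\tau\delta_{\tau,k}'|\leq C$ and the bound \eqref{eqneg1} shows $\bigl|\delta_{\tau,k}(e^{-\beta(z)\tau})/\delta_{\tau,k}(e^{-z\tau})-1\bigr|\leq CL/\kappa$, so taking $\kappa$ large perturbs the argument by at most $\epsilon/2$. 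That is shorter, but it outsources the delicate behaviour near $\zeta=1$ to the cited result. Your proof is self-contained: the compactness/continuity argument on $K$ handles the region away from the zero, and the factorization $\delta_k(\zeta)=(1-\zeta)h_k(\zeta)$ together with $\arg(1-e^{-w})\approx\arg\beta(z)\approx\arg z\in[-\theta,\theta]$ handles the neighbourhood of $\zeta=1$ --- which, as you say, is exactly where the constraint $\theta_0$ close to $\pi/2$ is forced and why the truncation of the contour matters. In effect you reprove the \cite{jin2017} sector inclusion for the shifted variable rather than perturbing off it.

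One point to repair: your localization step asserts ``since $\rho U(x_0)$ is real,'' but in this paper $\rho$ is allowed to be complex (the paper's own proof splits $\rho U(x_0)$ into $r=Re(\rho U(x_0))$ and $\omega=\mathbf{i}\cdot Im(\rho U(x_0))$ and treats the two factors separately, and the numerical examples use $\rho=-1+\pi\mathbf{i}$). The fix is harmless --- $|\arg\zeta|\leq\tau|Im\,\beta(z)|\leq\pi+\tau L<2\pi$, so the principal preimage is still the only admissible $w$, and all your estimates survive with $L$-dependent corrections absorbed into the smallness of $\tau_0$ --- but as written the claim is false in the paper's generality and should be restated.
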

\begin{proof}
Take $\kappa \, (>8L)$ sufficiently large, $r=Re(\rho U(x_{0}))$, and $\omega=\mathbf{i} \cdot Im(\rho U(x_{0}))$ for $x_{0}\in \bar{\Omega}$. Here we choose $\tau\leq \frac{\pi}{\kappa+1}$ to make $e^{-\tau r}$ lie in the neighborhood of $1$.
Taylor's expansion and $\tau|z|\leq C$ give
\begin{equation}\label{eqneg1}
	\begin{aligned}
		\left |\frac{z\tau e^{-z\tau}}{1- e^{-z\tau}}\right |\leq 1+O(|z|\tau)\leq C.
	\end{aligned}
\end{equation}
Combining \eqref{eqboundz}, \eqref{eqneg1}, and the bound of $|\tau\delta_{\tau,k}'(e^{-z\tau})|$, i.e., $|\tau\delta_{\tau,k}'(e^{-z\tau})|\leq 1+\tau|\delta_{\tau,k-1}(e^{-z\tau})|\leq C$ for $k>1$ and $|\tau\delta_{\tau,1}'(e^{-z\tau})|\leq C$, we have, for some $\sigma\in(0,1)$
\begin{equation*}
	\begin{aligned}
		 \frac{|\delta_{\tau,k}(e^{-r\tau} e^{-z\tau})-\delta_{\tau,k}(e^{-z\tau})|}{|\delta_{\tau,k}( e^{-z\tau})|}
		& \leq C\left | \frac{\delta_{\tau,k}(e^{-r\tau} e^{-z\tau})}{\delta_{\tau,k}( e^{-z\tau})}-1\right |\\
		&\leq  CL\left | \frac{\delta_{\tau,k}'(e^{-\sigma r\tau} e^{-z\tau})e^{-z\tau}e^{-\sigma r\tau}\tau}{\delta_{\tau,k}( e^{-z\tau})}\right |\\
		&\leq  CL|\tau\delta_{\tau,k}'(e^{-\sigma r\tau} e^{-z\tau})|\left | \frac{e^{-z\tau}}{\delta_{\tau,k}( e^{-z\tau})}\right |\\
		&\leq CL\left | \frac{\tau e^{-z\tau}}{1- e^{-z\tau}}\right |\\
	&	\leq C\frac{L}{\kappa},
	\end{aligned}
\end{equation*}
where $\delta_{\tau,k}'(\zeta)$ is the first order derivative about $\zeta$. Taking $\kappa$ large enough results in
\begin{equation*}
	|\arg(\delta_{\tau,k}(e^{-r\tau} e^{-z\tau}))-\arg(\delta_{\tau,k}(e^{-z\tau}))|\leq \epsilon/4.
\end{equation*}
Similarly,  there holds, for some $\sigma\in(0,1)$
\begin{equation*}
\begin{aligned}
\frac{|\delta_{\tau,k}(e^{-\omega\tau}e^{-r\tau}e^{-z\tau})-\delta_{\tau,k}(e^{-r\tau} e^{-z\tau})|}{|\delta_{\tau,k}( e^{-r\tau}e^{-z\tau})|}\leq& C\left | \frac{\delta_{\tau,k}(e^{-\omega\tau}e^{-r\tau} e^{-z\tau})}{\delta_{\tau,k}(e^{-r\tau} e^{-z\tau})}-1\right |\\
\leq &CL\left | \frac{\delta_{\tau,k}'(e^{-\sigma \omega\tau}e^{-r\tau} e^{-z\tau})e^{-z\tau}e^{-\sigma \omega\tau} e^{-r\tau}\tau}{\delta_{\tau,k}(e^{-r\tau} e^{-z\tau})}\right |\\
\leq &CL|\tau\delta_{\tau,k}'(e^{-\sigma \omega\tau}e^{-r\tau} e^{-z\tau})|\left | \frac{e^{-r\tau}e^{-z\tau}}{\delta_{\tau,k}( e^{-z\tau}e^{-r\tau})}\right |\\
\leq&CL\left | \frac{\tau e^{-z\tau}e^{-r\tau}}{1- e^{-z\tau}e^{-r\tau}}\right |\\
\leq&C\frac{L}{\kappa}.
\end{aligned}
\end{equation*}
Again, when $\kappa$ is large enough, it holds
\begin{equation*}
|\arg(\delta_{\tau,k}(e^{-\omega\tau}e^{-r\tau} e^{-z\tau}))-\arg(\delta_{\tau,k}(e^{-r\tau}e^{-z\tau}))|\leq \epsilon/4.
\end{equation*}
From \cite{jin2017}, we have
\begin{equation*}
	\delta_{\tau,k}(e^{-z\tau})\in \Sigma_{\pi-\vartheta_k+\epsilon/2}.
\end{equation*}
Thus
\begin{equation*}
	\beta_{\tau,k}(z)=\delta_{\tau,k}(e^{-\tau(z+\rho U(x_{0}))})\in \Sigma_{\pi-\vartheta_k+\epsilon}.
\end{equation*}
\end{proof}

According to the above two lemmas, the following temporal error estimates can be obtained.
\begin{theorem}\label{thmsemierrorOk}
	Let $G(t_{n})$ and $G^n$  be the solutions of Eqs. \eqref{eqretosol} and \eqref{eqsemisch}, respectively. Assume $U(x_{0})$ is bounded in $\bar{\Omega}$. If $G_0\in L^2(\Omega)$, $f\in C^{k-1}([0,T],L^{2}(\Omega))$, and $\int_{0}^{t}\|\partial^{k}_{t}f(s)\|_{L^2(\Omega)}ds<\infty$, then there holds
	\begin{equation*}
	\begin{aligned}
		\|G(t_{n})-G^{n}\|_{L^2(\Omega)}\leq& Ct_n^{-k}\tau^{k}\|G_0\|_{L^2(\Omega)}+C\sum_{l=0}^{k-1}\tau^{k}t^{l+1-k}_{n}\|\partial^{l}_{t}f(0)\|_{L^2(\Omega)}\\
		&+C\tau^{k}\int_{0}^{t_{n}}\|\partial^{k}_{t}f(s)\|_{L^2(\Omega)}ds.
	\end{aligned}
	\end{equation*}
\end{theorem}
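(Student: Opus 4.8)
The plan is to run the standard Laplace‑transform / contour‑integral error analysis, as in \cite{jin2017,Sun:2020}. First I would represent $G(t_n)$ by deforming the Bromwich line in the inversion of \eqref{eqsolresp1} onto $\Gamma_{\theta,\kappa}$ (admissible by the analyticity in Lemma \ref{lemmaBeta}), so that $G(t_n)=\frac{1}{2\pi\mathbf{i}}\int_{\Gamma_{\theta,\kappa}}e^{zt_n}\tilde G(z)\,dz$, while $G^n$ is already given in \eqref{eqtimesemidissol} as an integral over the truncated contour $\Gamma^\tau_{\theta,\kappa}$. Writing $\Gamma_{\theta,\kappa}=\Gamma^\tau_{\theta,\kappa}\cup(\Gamma_{\theta,\kappa}\setminus\Gamma^\tau_{\theta,\kappa})$ splits the error into a \emph{tail} term (the part of the exact integral with $|z|\gtrsim\tau^{-1}$) plus a \emph{main} term, the integral over $\Gamma^\tau_{\theta,\kappa}$ of the difference of the exact and discrete integrands. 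Since \eqref{eqsolresp1} and \eqref{eqtimesemidissol} are organized into four blocks — those carrying $G_0$, $f^0=f(0)$, the derivatives $\partial^l_tf(0)$ for $l=1,\dots,k-2$, and the Taylor remainder $R_k$ of \eqref{eqtaylorf} — I would estimate these contributions one block at a time.

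On $\Gamma_{\theta,\kappa}\setminus\Gamma^\tau_{\theta,\kappa}$ one has $|z|\ge c\tau^{-1}$ and $\operatorname{Re}z=|z|\cos\theta<0$; Lemma \ref{lemmaBeta}(2) bounds the four exact integrands by $C|z|^{-1}\|G_0\|_{L^2(\Omega)}$, $C|z|^{-2}\|f(0)\|_{L^2(\Omega)}$, $C|z|^{-l-2}\|\partial^l_tf(0)\|_{L^2(\Omega)}$, and (using $\|\tilde R_k(z)\|_{L^2(\Omega)}\le C|z|^{-k}(\|\partial^{k-1}_tf(0)\|_{L^2(\Omega)}+\|\widetilde{\partial^k_tf}(z)\|_{L^2(\Omega)})$) a still more decaying term. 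Inserting the elementary inequality $1\le(c^{-1}\tau|z|)^{k}$ pulls out $\tau^k$, and the remaining ray integral $\int_{c/\tau}^{\infty}e^{-c'rt_n}r^{k-1-m}\,dr\le Ct_n^{m-k}$ (with $m=0,1,l+1$) yields the asserted negative powers of $t_n$; for the $\widetilde{\partial^k_tf}$‑piece a Parseval‑type estimate returns $\int_0^{t_n}\|\partial^k_tf(s)\|_{L^2(\Omega)}ds$.

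For the main term I would bound each difference of integrands by a telescoping argument that swaps one symbol at a time: $\mu_k(e^{-\beta(z)\tau})\to1$, $(\delta_{\tau,k}(e^{-z\tau}))^{-1}\mu_k(e^{-z\tau})\to z^{-1}$, $\eta_{k,l}(e^{-z\tau})\to z^{-l-1}$, $\beta_{\tau,k}(z)^{\alpha-1}\to\beta(z)^{\alpha-1}$, and finally $(\beta_{\tau,k}(z)^{\alpha}+A)^{-1}\to(\beta(z)^{\alpha}+A)^{-1}$. The last two swaps must respect that $\beta(z)^{\gamma}$ is multiplication by $(z+\rho U(x_0))^{\gamma}$, which does \emph{not} commute with $A=-\Delta$; I would therefore use the resolvent identity
\[
(\beta_{\tau,k}(z)^{\alpha}+A)^{-1}-(\beta(z)^{\alpha}+A)^{-1}=(\beta(z)^{\alpha}+A)^{-1}\bigl(\beta(z)^{\alpha}-\beta_{\tau,k}(z)^{\alpha}\bigr)(\beta_{\tau,k}(z)^{\alpha}+A)^{-1},
\]
together with the bounds $\|\beta(z)^{\alpha}-\beta_{\tau,k}(z)^{\alpha}\|_{L^\infty(\bar\Omega)}\le C\tau^k|z|^{\alpha+k}$ and $\|\beta(z)^{\alpha-1}\|_{L^\infty(\bar\Omega)}\le C|z|^{\alpha-1}$ from Lemma \ref{lemmabetatauOk}(1),(3), the resolvent bound $\|(\beta_{\tau,k}(z)^{\alpha}+A)^{-1}\|\le C|z|^{-\alpha}$ from Lemma \ref{lemmabetatauOk}(2), and the consistency estimates \eqref{equreqest}, which hold for the coefficients $a^{(k)}_j,b^{(k)}_{l,j}$ of Tables \ref{tab:defank} and \ref{tabblnk}. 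This makes each difference integrand $\le C\tau^k|z|^{k-1-m}e^{\operatorname{Re}(z)t_n}$ times the relevant data norm, with $m=0$ for the $G_0$‑block, $m=1$ for the $f(0)$‑block, and $m=l+1$ for the $\partial^l_tf(0)$‑block. Integrating over $\Gamma^\tau_{\theta,\kappa}$: the two rays give $t_n^{m-k}$ via $\int_\kappa^{\infty}e^{-c'rt_n}r^{k-1-m}\,dr$, and the arc $|z|=\kappa$ contributes $\le Ce^{\kappa T}$, which is $\le Ct_n^{m-k}$ because $t_n\le T$ (so the final constant depends on $T$).

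The genuinely delicate block is $R_k$. Its leading monomial $\tfrac{t^{k-1}}{(k-1)!}\partial^{k-1}_tf(0)$, with Laplace transform $z^{-k}\partial^{k-1}_tf(0)$, is the one term \eqref{eqfullscheme} does not correct; after the symbol comparison its contour integrand has borderline decay $|z|^{-1}$, so the modulus bound is insufficient and one must exploit analyticity (deforming / closing the contour to the left) to recover the clean $C\tau^k\|\partial^{k-1}_tf(0)\|_{L^2(\Omega)}$, i.e. the $l=k-1$ summand of the theorem. The convolution part $\tfrac{t^{k-1}}{(k-1)!}*\partial^k_tf(t)$ is handled by the standard convolution‑quadrature error estimate, bounding $\|\tilde R_k(z)-\tau\sum_{n\ge1}R_k^n e^{-zt_n}\|_{L^2(\Omega)}$ and using the order‑$k$ vanishing of $R_k$ at $t=0$ together with $\int_0^t\|\partial^k_tf(s)\|_{L^2(\Omega)}ds<\infty$; a Parseval‑type step then turns the contour integral into $C\tau^k\int_0^{t_n}\|\partial^k_tf(s)\|_{L^2(\Omega)}ds$. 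I expect this $R_k$‑block — the convolution‑quadrature remainder under only the minimal integrability hypothesis on $\partial^k_tf$, combined with keeping all the operator‑valued bookkeeping consistent in spite of the noncommutativity of $\beta(z)^{\alpha}$ and $A$ — to be the main obstacle; once Lemmas \ref{lemmaBeta} and \ref{lemmabetatauOk} and the consistency bounds \eqref{equreqest} are in place, the rest is bookkeeping.
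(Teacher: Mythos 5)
Your proposal is correct and follows essentially the same route as the paper: the same four-block decomposition of $G(t_n)-G^n$ according to \eqref{eqsolresp1} versus \eqref{eqtimesemidissol}, the same splitting of each block into a tail over $\Gamma_{\theta,\kappa}\setminus\Gamma^{\tau}_{\theta,\kappa}$ plus a symbol-comparison on $\Gamma^{\tau}_{\theta,\kappa}$ controlled by Lemmas \ref{lemmaBeta} and \ref{lemmabetatauOk} and the consistency estimates \eqref{equreqest}, and the same Lubich-style handling of the $R_k$ block. The only difference is one of exposition: you spell out the resolvent-identity bookkeeping and the borderline $z^{-k}\partial_t^{k-1}f(0)$ term explicitly, whereas the paper compresses that step into a citation of \cite{lubich1996}.
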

\begin{proof}
	Subtracting \eqref{eqtimesemidissol} from \eqref{eqsolresp1} leads to
	\begin{equation*}
	\begin{aligned}
	\|G(t_{n})-G^{n}\|_{L^2(\Omega)}	\leq&C(\uppercase\expandafter{\romannumeral1}+\uppercase\expandafter{\romannumeral2}+\uppercase\expandafter{\romannumeral3}+\uppercase\expandafter{\romannumeral4}),
	\end{aligned}
	\end{equation*}
	where
	\begin{equation*}
	\begin{aligned}
	\uppercase\expandafter{\romannumeral1}\leq& C\bigg\|\int_{\Gamma_{\theta,\kappa}}e^{zt_{n}}((\beta(z))^{\alpha}+A)^{-1}((\beta(z))^{\alpha-1}G^{0})dz\\
	&\quad-\int_{\Gamma^{\tau}_{\theta,\kappa}}e^{zt_{n}}((\beta_{\tau,k}(z))^{\alpha}+A)^{-1}\left ((\beta_{\tau,k}(z))^{\alpha-1}\mu_{k}(e^{-\beta(z)\tau})G^{0}\right )dz\bigg\|_{L^2(\Omega)},\\
	\uppercase\expandafter{\romannumeral2}\leq& C\bigg\|\int_{\Gamma_{\theta,\kappa}}e^{zt_{n}}((\beta(z))^{\alpha}+A)^{-1}((\beta(z))^{\alpha-1}z^{-1}f^{0})dz\\
	&\quad-\int_{\Gamma^{\tau}_{\theta,\kappa}}e^{zt_{n}}((\beta_{\tau,k}(z))^{\alpha}+A)^{-1}
\\
	&\quad \cdot
\left  ((\beta_{\tau,k}(z))^{\alpha-1}(\delta_{\tau,k}(e^{-z\tau}))^{-1}\mu_{k}(e^{-z\tau})f^{0}\right )dz\bigg\|_{L^2(\Omega)},\\
	\uppercase\expandafter{\romannumeral3}\leq& C\bigg\|\int_{\Gamma_{\theta,\kappa}}e^{zt_{n}}((\beta(z))^{\alpha}+A)^{-1}\sum_{l=1}^{k-2}\left ((\beta(z))^{\alpha-1}z^{-l-1}\partial^{l}_{t}f(0)\right )dz\\
	&\quad-\int_{\Gamma^{\tau}_{\theta,\kappa}}e^{zt_{n}}((\beta_{\tau,k}(z))^{\alpha}+A)^{-1}\sum_{l=1}^{k-2}\left ((\beta_{\tau,k}(z))^{\alpha-1}\eta_{k,l}(e^{-z\tau})\partial^{l}_{t}f(0)\right )dz\bigg\|_{L^2(\Omega)},\\
	\uppercase\expandafter{\romannumeral4}\leq& C\bigg\|\int_{\Gamma_{\theta,\kappa}}e^{zt_{n}}((\beta(z))^{\alpha}+A)^{-1}\left ((\beta(z))^{\alpha-1}\tilde{R}_{k}\right )dz\\
	&\quad-\int_{\Gamma^{\tau}_{\theta,\kappa}}e^{zt_{n}}((\beta_{\tau,k}(z))^{\alpha}+A)^{-1}\left ((\beta_{\tau,k}(z))^{\alpha-1}\tau\sum_{n=1}^{\infty}R^{n}_{k}e^{-zt_{n}}\right )dz\bigg\|_{L^2(\Omega)}.\\
	\end{aligned}
	\end{equation*}
	For $\uppercase\expandafter{\romannumeral1}$, it has
	\begin{equation*}
	\begin{aligned}
	\uppercase\expandafter{\romannumeral1}\leq& C\bigg\|\int_{\Gamma_{\theta,\kappa}\backslash\Gamma^{\tau}_{\theta,\kappa}}e^{zt_{n}}((\beta(z))^{\alpha}+A)^{-1}((\beta(z))^{\alpha-1}G^{0})dz\bigg\|_{L^2(\Omega)}\\
	&\quad+\bigg\|\int_{\Gamma^{\tau}_{\theta,\kappa}}e^{zt_{n}}\left (((\beta(z))^{\alpha}+A)^{-1}\left ((\beta(z))^{\alpha-1}G_{0}\right)\right.\\
	&\left. \quad -((\beta_{\tau,k}(z))^{\alpha}+A)^{-1}\left ((\beta_{\tau,k}(z))^{\alpha-1}\mu_{k}(e^{-\beta(z)\tau})G^{0}\right )\right )dz\bigg\|_{L^2(\Omega)}.\\
	\end{aligned}
	\end{equation*}
Combining Eq. \eqref{equreqest} and Lemmas \ref{lemmaBeta} and \ref{lemmabetatauOk} yields
	\begin{equation*}
	\begin{aligned}
	&\left\|((\beta(z))^{\alpha}+A)^{-1}\left ((\beta(z))^{\alpha-1}G_{0}\right)-((\beta_{\tau,k}(z))^{\alpha}+A)^{-1} \right.
\\
	&
\cdot\left.\left ((\beta_{\tau,k}(z))^{\alpha-1}\mu_{k}(e^{-\beta(z)\tau})G^{0}\right )\right\|_{L^2(\Omega)}\\
&	\leq \left\|((\beta(z))^{\alpha}+A)^{-1}\left ((\beta(z))^{\alpha-1}G_{0}\right)-((\beta_{\tau,k}(z))^{\alpha}+A)^{-1}\left ((\beta(z))^{\alpha-1}G_{0}\right)\right\|_{L^2(\Omega)}\\
	&+\left\|((\beta_{\tau,k}(z))^{\alpha}+A)^{-1}\left ((\beta(z))^{\alpha-1}G_{0}\right)-((\beta_{\tau,k}(z))^{\alpha}+A)^{-1}\right.
\\
	&
\cdot \left.\left ((\beta_{\tau,k}(z))^{\alpha-1}\mu_{k}(e^{-\beta(z)\tau})G^{0}\right )\right\|_{L^2(\Omega)}\\
&	\leq Cz^{k-1}\tau^{k}\|G_{0}\|_{L^2(\Omega)},
	\end{aligned}
	\end{equation*}
	which leads to
	\begin{equation*}
	\uppercase\expandafter{\romannumeral1}\leq C\tau^{k}t^{-k}_{n}\|G_{0}\|_{L^2(\Omega)}.
	\end{equation*}
	Similarly, we obtain
	\begin{equation*}
	\uppercase\expandafter{\romannumeral2}\leq C\tau^{k}t^{1-k}_{n}\|f^{0}\|_{L^2(\Omega)}.
	\end{equation*}
	As for $\uppercase\expandafter{\romannumeral3}$, we have
	\begin{equation*}
	\begin{aligned}
	\uppercase\expandafter{\romannumeral3}\leq& C\bigg\|\int_{\Gamma_{\theta,\kappa}\backslash \Gamma^{\tau}_{\theta,\kappa}}e^{zt_{n}}((\beta(z))^{\alpha}+A)^{-1}\sum_{l=1}^{k-2}\left ((\beta(z))^{\alpha-1}z^{-l-1}\partial^{l}_{t}f(0)\right )dz\bigg\|_{L^2(\Omega)}\\
	&\quad+\bigg\|\int_{\Gamma^{\tau}_{\theta,\kappa}}e^{zt_{n}}\bigg(((\beta(z))^{\alpha}+A)^{-1}\sum_{l=1}^{k-2}\left ((\beta(z))^{\alpha-1}z^{-l-1}\partial^{l}_{t}f(0)\right )\\
	&-((\beta_{\tau,k}(z))^{\alpha}+A)^{-1}\sum_{l=1}^{k-2}\left ((\beta_{\tau,k}(z))^{\alpha-1}\eta_{k,l}(e^{-z\tau})\partial^{l}_{t}f(0)\right )\bigg)dz\bigg\|_{L^2(\Omega)}.
	\end{aligned}
	\end{equation*}
Combining Eq. \eqref{equreqest} and Lemmas \ref{lemmaBeta} and \ref{lemmabetatauOk} gives
	\begin{equation*}
		\begin{aligned}
		&\bigg\|((\beta(z))^{\alpha}+A)^{-1}\left ((\beta(z))^{\alpha-1}z^{-l-1}\partial^{l}_{t}f(0)\right )\\
		&\quad-((\beta_{\tau,k}(z))^{\alpha}+A)^{-1}\left ((\beta_{\tau,k}(z))^{\alpha-1}\eta_{k,l}(e^{-z\tau})\partial^{l}_{t}f(0)\right )\bigg\|_{L^2(\Omega)}\\
		\leq&\left\|\left(((\beta(z))^{\alpha}+A)^{-1}-((\beta_{\tau,k}(z))^{\alpha}+A)^{-1}\right)\left ((\beta(z))^{\alpha-1}z^{-l-1}\partial^{l}_{t}f(0)\right )\right\|_{L^2(\Omega)}\\
		&+\left\|((\beta_{\tau,k}(z))^{\alpha}+A)^{-1}\left ((\beta(z))^{\alpha-1}z^{-l-1}\partial^{l}_{t}f(0) \right.\right.
\\
		& \left.\left.
-((\beta_{\tau,k}(z))^{\alpha-1}\eta_{k,l}(e^{-z\tau})\partial^{l}_{t}f(0)\right )\right\|_{L^2(\Omega)}\\
		\leq &Cz^{k-l-2}\tau^{k}\|\partial^{l}_{t}f(0)\|_{L^2(\Omega)},
		\end{aligned}
	\end{equation*}
	which implies
	\begin{equation*}
	\uppercase\expandafter{\romannumeral3}\leq C\tau^{k}\sum_{l=1}^{k-2}t^{l+1-k}_{n}\|\partial^{l}_{t}f(0)\|_{L^2(\Omega)}.
	\end{equation*}
	As for $\uppercase\expandafter{\romannumeral4}$, it has
	\begin{equation*}
		\uppercase\expandafter{\romannumeral4}\leq \uppercase\expandafter{\romannumeral4}_{1}+\uppercase\expandafter{\romannumeral4}_{2},
	\end{equation*}
	where
	\begin{equation*}
		\begin{aligned}
		\uppercase\expandafter{\romannumeral4}_{1}\leq& C\bigg\|\int_{\Gamma_{\theta,\kappa}}e^{zt_{n}}((\beta(z))^{\alpha}+A)^{-1}\left ((\beta(z))^{\alpha-1}z^{-k}\partial^{k-1}_{t}f(0)\right )dz\\
		&\quad-\int_{\Gamma^{\tau}_{\theta,\kappa}}e^{zt_{n}}((\beta_{\tau,k}(z))^{\alpha}+A)^{-1}
\\
		&\quad
\cdot\left ((\beta_{\tau,k}(z))^{\alpha-1}\sum_{n=1}^{\infty}\frac{t_{n}^{k-1}}{(k-1)!}\partial^{k-1}_{t}f(0)e^{-zt_{n}}\right )dz\bigg\|_{L^2(\Omega)},\\
		\uppercase\expandafter{\romannumeral4}_{2}\leq& C\bigg\|\frac{1}{2\pi\mathbf{i}}\int_{\Gamma_{\theta,\kappa}}e^{zt_{n}}((\beta(z))^{\alpha}+A)^{-1}\left ((\beta(z))^{\alpha-1}\tilde{f}\right )dz\\
		&\quad-\frac{1}{2\pi\mathbf{i}}\int_{\Gamma^{\tau}_{\theta,\kappa}}e^{zt_{n}}((\beta_{\tau,k}(z))^{\alpha}+A)^{-1}
\\
		&\quad
\cdot\left ((\beta_{\tau,k}(z))^{\alpha-1}\sum_{n=1}^{\infty}\left (\frac{t^{k-1}}{(k-1)!}\ast\partial^{k}_{t}f\right )(t_{n})e^{-zt_{n}}\right )dz\bigg\|_{L^2(\Omega)}.\\
		\end{aligned}
	\end{equation*}
Simple calculations \cite{lubich1996} lead to
	\begin{equation}
		\uppercase\expandafter{\romannumeral4}\leq C\tau^{k}\|\partial^{k-1}_{t}f(0)\|_{L^2(\Omega)}+C\tau^{k}\int_{0}^{t_{n}}\|\partial^{k}_{t}f(s)\|_{L^2(\Omega)}ds.
	\end{equation}
\end{proof}

Now, we provide the spatial error estimate.

\begin{lemma}[\cite{Sun:2020}]\label{lemeroper}
	Let $v\in L^2(\Omega)$, $U(x_{0})$ be bounded in $\bar{\Omega}$ and $z\in\Sigma^{\tau}_{\theta,\kappa}$ with $\kappa$ largely enough, where $\Sigma^{\tau}_{\theta,\kappa}$ is defined in Lemma \ref{lemmabetatauOk}. Denote $w=((\beta_{\tau,k}(z))^{\alpha}+A)^{-1}v$ and $w_h=((\beta_{\tau,k}(z))^{\alpha}+A_h)^{-1} P_hv$, where $\beta_{\tau,k}(z)$ is defined in \eqref{betatauOk}. Then one has
	\begin{equation*}
	\|w-w_h\|_{L^2(\Omega)}+h\|w-w_h\|_{\dot{H}^{1}(\Omega)}\leq Ch^{2}\|v\|_{L^2(\Omega)}.
	\end{equation*}
\end{lemma}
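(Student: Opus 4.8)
The plan is to reduce the claimed bound to a standard finite element error estimate for the shifted elliptic problem $(\lambda+A)w=v$, where $\lambda:=(\beta_{\tau,k}(z))^{\alpha}$, while tracking every constant to be independent of $z$ (equivalently of $\lambda$), of $\tau$ and of $h$. First I would locate the parameter: by Lemma \ref{lemmabetatauOk}(1), $\beta_{\tau,k}(z)\in\Sigma_{\pi-\vartheta_{k}+\epsilon,C\kappa}$, hence $\lambda\in\Sigma_{\psi,c\kappa^{\alpha}}$ with $\psi=\alpha(\pi-\vartheta_{k}+\epsilon)<\pi$, so $-\lambda$ stays at distance $\ge c|\lambda|$ from $[\lambda_{1},\infty)$, the spectrum of both $A$ and $A_{h}$. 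From this and the self-adjointness of $A_{h}$ on $X_{h}$ one gets, with $z$-independent constants, the resolvent bounds $\|(\lambda+A)^{-1}\|+\|(\lambda+A_{h})^{-1}\|\le C|\lambda|^{-1}$, $\|A(\lambda+A)^{-1}\|+\|A_{h}(\lambda+A_{h})^{-1}\|\le C$ and $\|A_{h}^{1/2}(\lambda+A_{h})^{-1}\|\le C|\lambda|^{-1/2}$: for $A$ these are Lemma \ref{lemmabetatauOk}(2) rewritten via $|\lambda|\simeq|z|^{\alpha}$, and for $A_{h}$ they follow from the elementary inequality $|\lambda+s|\ge c(|\lambda|+s)$, valid for $s\ge0$ and $\lambda\in\Sigma_{\psi}$, applied to the eigenvalues of $A_{h}$. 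Elliptic regularity on the convex polygon $\Omega$ also gives $w\in\dot{H}^{2}(\Omega)$ with $\|Aw\|=\|v-\lambda w\|\le\|v\|+|\lambda|\,\|(\lambda+A)^{-1}v\|\le C\|v\|_{L^{2}(\Omega)}$.

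Next I would introduce the Ritz projection $R_{h}:H^{1}_{0}(\Omega)\to X_{h}$ defined by $(\nabla R_{h}u,\nabla\chi)=(\nabla u,\nabla\chi)$ for all $\chi\in X_{h}$, equivalently $A_{h}R_{h}u=P_{h}Au$ for $u\in\dot{H}^{2}(\Omega)$, for which the classical bound $\|u-R_{h}u\|_{L^{2}(\Omega)}+h\|u-R_{h}u\|_{\dot{H}^{1}(\Omega)}\le Ch^{2}\|Au\|_{L^{2}(\Omega)}$ holds. Using $v-Aw=\lambda w$, $R_{h}w=P_{h}R_{h}w$, and $A_{h}R_{h}w=P_{h}Aw$, a one-line computation yields the key identity
\begin{equation*}
(\lambda+A_{h})(w_{h}-R_{h}w)=P_{h}v-\lambda R_{h}w-P_{h}Aw=\lambda\,P_{h}(w-R_{h}w),
\end{equation*}
so $w_{h}-R_{h}w=\lambda\,(\lambda+A_{h})^{-1}P_{h}(w-R_{h}w)$. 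For the $L^{2}$ part, $\|w_{h}-R_{h}w\|_{L^{2}(\Omega)}\le|\lambda|\,\|(\lambda+A_{h})^{-1}\|\,\|P_{h}(w-R_{h}w)\|_{L^{2}(\Omega)}\le C\|w-R_{h}w\|_{L^{2}(\Omega)}\le Ch^{2}\|Aw\|_{L^{2}(\Omega)}$, and the triangle inequality with the Ritz estimate and $\|Aw\|\le C\|v\|$ give $\|w-w_{h}\|_{L^{2}(\Omega)}\le Ch^{2}\|v\|_{L^{2}(\Omega)}$. For the $\dot{H}^{1}(\Omega)$ part I would commute $A_{h}^{1/2}$ through the resolvent: since $A_{h}^{1/2}(\lambda+A_{h})^{-1}=(\lambda+A_{h})^{-1}A_{h}^{1/2}$ and $\|A_{h}^{1/2}\chi_{h}\|=\|\nabla\chi_{h}\|$,
\begin{equation*}
\begin{aligned}
\|\nabla(w_{h}-R_{h}w)\|&=|\lambda|\,\|(\lambda+A_{h})^{-1}A_{h}^{1/2}P_{h}(w-R_{h}w)\|\\
&\le|\lambda|\cdot C|\lambda|^{-1}\,\|\nabla P_{h}(w-R_{h}w)\|\le C\|\nabla(w-R_{h}w)\|,
\end{aligned}
\end{equation*}
using the $H^{1}$-stability of $P_{h}$ on the quasi-uniform mesh; with the Ritz estimate this is $\le Ch\|Aw\|\le Ch\|v\|_{L^{2}(\Omega)}$, i.e. $h\|w-w_{h}\|_{\dot{H}^{1}(\Omega)}\le Ch^{2}\|v\|_{L^{2}(\Omega)}$. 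Adding the two bounds finishes the argument.

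The hard part will not be any single computation but the bookkeeping of $z$- and $\tau$-uniformity, and in particular two points. First, one must confirm that $\lambda=(\beta_{\tau,k}(z))^{\alpha}$ lies in a fixed sector of half-angle strictly below $\pi$, uniformly in $\tau$ and $x_{0}$, which is exactly the content of Lemma \ref{lemmabetatauOk} (note that for $k=6$ one has $\pi-\vartheta_{k}+\epsilon\approx162^{\circ}$, so this is not automatic). Second, the $\dot{H}^{1}(\Omega)$ estimate must be arranged so that the factor $|\lambda|$ produced by the identity $w_{h}-R_{h}w=\lambda(\lambda+A_{h})^{-1}P_{h}(w-R_{h}w)$ is absorbed by the full resolvent decay $\|(\lambda+A_{h})^{-1}\|\le C|\lambda|^{-1}$ after commuting $A_{h}^{1/2}$ past $(\lambda+A_{h})^{-1}$; using instead the weaker $\|A_{h}^{1/2}(\lambda+A_{h})^{-1}\|\le C|\lambda|^{-1/2}$ would leave a stray $|\lambda|^{1/2}$ and only deliver an $\mathcal{O}(h)$, rather than $\mathcal{O}(h^{2})$, bound after multiplication by $h$.
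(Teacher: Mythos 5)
Your argument is correct, and it is essentially the proof the paper is invoking: the paper itself gives no details here, deferring to \cite{Sun:2020}, where the estimate is obtained by exactly this standard route — sectoriality of $\lambda=(\beta_{\tau,k}(z))^{\alpha}$ from Lemma \ref{lemmabetatauOk}, the Ritz projection with its $O(h^{2})$/$O(h)$ bounds, the identity $w_h-R_hw=\lambda(\lambda+A_h)^{-1}P_h(w-R_hw)$, and $z$-uniform discrete resolvent bounds. Your two flagged subtleties (uniform sector angle $<\pi$ for all $k\le6$, and commuting $A_h^{1/2}$ through the resolvent so the full $|\lambda|^{-1}$ decay is used in the $\dot H^{1}$ estimate) are exactly the right points to watch, and your handling of both is sound.
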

\begin{proof}
	The results can be similarly obtained as the proof in \cite{Sun:2020}.
\end{proof}

\begin{theorem}\label{thmfullerrorO2}
	Let $G^{n}$ and $G^{n}_{h}$  be the solutions of Eqs. \eqref{eqtimesemidissol} and \eqref{eqfullscheme} respectively and assume $G_0\in L^2(\Omega)$, $f\in C^{k}([0,T],L^{2}(\Omega))$, $\int_{0}^{t}\|\partial^{k}_{t}f(s)\|_{L^2(\Omega)}ds<\infty$ and $U(x_{0})$ is bounded in $\bar{\Omega}$. Then we have
	\begin{equation}
	\begin{aligned}
	&\|G^{n}-G^{n}_{h}\|_{L^2(\Omega)}+h\|G^{n}-G^{n}_{h}\|_{\dot{H}^{1}(\Omega)}\\
	&\qquad\qquad \leq Ch^{2}t_{n}^{-\alpha}\|G_0\|_{L^2(\Omega)}+Ch^{2}\sum_{l=0}^{k-1}\|\partial^{l}_{t}f(0)\|_{L^2(\Omega)}+Ch^{2}\int_{0}^{t_{n}}\left \|\partial^{k}_{t}f(s)\right \|_{L^2(\Omega)}ds.
	\end{aligned}
	\end{equation}
\end{theorem}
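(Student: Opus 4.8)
The plan is to estimate the difference $G^{n}-G^{n}_{h}$ term by term using the contour-integral representations \eqref{eqtimesemidissol} and \eqref{eqfulldissol}, which differ only by the replacement of $((\beta_{\tau,k}(z))^{\alpha}+A)^{-1}(\cdot)$ with $((\beta_{\tau,k}(z))^{\alpha}+A_{h})^{-1}P_{h}(\cdot)$ inside each of the four integrals over $\Gamma^{\tau}_{\theta,\kappa}$. For each integrand I would first apply Lemma \ref{lemeroper} with the appropriate right-hand side $v$: for the first integral $v=(\beta_{\tau,k}(z))^{\alpha-1}\mu_{k}(e^{-\beta(z)\tau})G^{0}$, for the second $v=(\beta_{\tau,k}(z))^{\alpha-1}(\delta_{\tau,k}(e^{-z\tau}))^{-1}\mu_{k}(e^{-z\tau})f^{0}$, and so on, obtaining the pointwise bound $Ch^{2}\|v\|_{L^{2}(\Omega)}$ in the $L^{2}$-norm (and $Ch\|v\|_{L^{2}(\Omega)}$ in the $\dot H^{1}$-norm). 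This reduces everything to estimating $\int_{\Gamma^{\tau}_{\theta,\kappa}}|e^{zt_{n}}|\,\|v(z)\|_{L^{2}(\Omega)}\,|dz|$ for each of the four choices of $v$.

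Next I would bound each of those scalar contour integrals using Lemma \ref{lemmabetatauOk}(1), which gives $C_{1}|z|\le|\beta_{\tau,k}(z)|\le C_{2}|z|$, together with the symbol estimates already recorded in \eqref{equreqest}: $\|\mu_{k}(e^{-z\tau})\|\le C$ (since $\mu_{k}(e^{-z\tau})-1=O(|z|^{k}\tau^{k})$ and $|z|\tau\le C$ on $\Gamma^{\tau}_{\theta,\kappa}$), $\|(\delta_{\tau,k}(e^{-z\tau}))^{-1}\mu_{k}(e^{-z\tau})\|\le C|z|^{-1}$, and $\|\eta_{k,l}(e^{-z\tau})\|\le C|z|^{-l-1}$. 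For the first integral this yields $\int_{\Gamma^{\tau}_{\theta,\kappa}}|e^{zt_{n}}||z|^{\alpha-1}|dz|\le Ct_{n}^{-\alpha}$ by the standard deformation/scaling argument (splitting the contour into the circular arc $|z|=\kappa$ and the two rays, and substituting $|z|t_{n}=s$); for the second and the $l$-indexed ones the extra negative powers of $|z|$ only improve decay, so their contributions are bounded by $C\|f^{0}\|_{L^{2}}$ and $C\sum_{l=1}^{k-2}\|\partial^{l}_{t}f(0)\|_{L^{2}}$ respectively, uniformly in $t_{n}\le T$. The fourth integral, involving the Taylor remainder $R^{n}_{k}$, is handled exactly as term $\uppercase\expandafter{\romannumeral4}$ in the proof of Theorem \ref{thmsemierrorOk}: writing $R_{k}(t)=\frac{t^{k-1}}{(k-1)!}\partial^{k-1}_{t}f(0)+\frac{t^{k-1}}{(k-1)!}\ast\partial^{k}_{t}f(t)$ and using the same convolution-quadrature identities from \cite{lubich1996}, one gets the bound $C\|\partial^{k-1}_{t}f(0)\|_{L^{2}}+C\int_{0}^{t_{n}}\|\partial^{k}_{t}f(s)\|_{L^{2}}ds$. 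Collecting the four pieces and multiplying by the uniform $h^{2}$ (resp. $h$) from Lemma \ref{lemeroper} gives the stated $L^{2}$ (resp. $\dot H^{1}$) estimate.

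The main obstacle I anticipate is making the term-by-term splitting legitimate, i.e. verifying that Lemma \ref{lemeroper} applies uniformly to every $z\in\Gamma^{\tau}_{\theta,\kappa}$ with a constant independent of $z$, $\tau$, and $h$, and that the resulting scalar integrands are genuinely integrable over the (finite but $\tau$-dependent) contour $\Gamma^{\tau}_{\theta,\kappa}$ so that the bounds are uniform in $\tau$ and $n$. In particular, one must check that the outer part of the contour (where $|z|\sim\pi/(\tau\sin\theta)$) contributes nothing worse than the $t_{n}^{-\alpha}$ and $O(1)$ bounds claimed — this is where $|z|\tau\le C$ on $\Gamma^{\tau}_{\theta,\kappa}$ and the decay furnished by the negative powers $|z|^{\alpha-1}$, $|z|^{\alpha-2}$, $|z|^{\alpha-l-2}$ are essential. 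A secondary point is that the source-free regularity factor $t_{n}^{-\alpha}$ (rather than $t_{n}^{-k}$) for the $G_{0}$-term is correct here because, unlike in Theorem \ref{thmsemierrorOk}, we are comparing two discrete solutions and the spatial error carries no extra temporal smoothing; one only needs $\|((\beta_{\tau,k}(z))^{\alpha-1}\mu_{k}(e^{-\beta(z)\tau})\|\le C|z|^{\alpha-1}$, which is exactly what Lemma \ref{lemmabetatauOk} and the bound on $\mu_{k}$ provide.
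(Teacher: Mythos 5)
Your proposal is correct and follows essentially the same route as the paper: the same four-term decomposition of $G^{n}-G^{n}_{h}$ via the contour representations \eqref{eqtimesemidissol} and \eqref{eqfulldissol}, with Lemma \ref{lemeroper} supplying the pointwise $O(h^{2})$ (resp. $O(h)$) resolvent-difference bound and Lemma \ref{lemmabetatauOk} together with the symbol estimates \eqref{equreqest} controlling the remaining scalar contour integrals. The paper's own proof is just a terser version of exactly this argument, so your fleshed-out version of the $t_{n}^{-\alpha}$ bound for the $G_{0}$-term and the $O(1)$ bounds for the source terms is precisely what is intended.
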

\begin{proof}
	Subtracting \eqref{eqfulldissol} from \eqref{eqtimesemidissol} leads to
	\begin{equation*}
		\begin{aligned}
		&\|G^{n}-G^{n}_{h}\|_{L^2(\Omega)}\\
		\leq&C\left \|\int_{\Gamma^{\tau}_{\theta,\kappa}}e^{zt_{n}}\left (((\beta_{\tau,k}(z))^{\alpha}+A)^{-1}-((\beta_{\tau,k}(z))^{\alpha}+A_{h})^{-1}P_{h}\right )
 \right.
\\
		&
\cdot \left.
\left ((\beta_{\tau,k}(z))^{\alpha-1}\mu_{k}(e^{-\beta(z)\tau})G^{0}\right )dz\right \|_{L^2(\Omega)}\\
		&+C\left \|\int_{\Gamma^{\tau}_{\theta,\kappa}}e^{zt_{n}}
\left (((\beta_{\tau,k}(z))^{\alpha}+A)^{-1}-((\beta_{\tau,k}(z))^{\alpha}+A_{h})^{-1}P_{h}\right )
\right.
\\
		&
\cdot \left.
\left ((\beta_{\tau,k}(z))^{\alpha-1}(\delta_{\tau,k}(e^{-z\tau}))^{-1}\mu_{k}(e^{-z\tau})f^{0}\right )dz\right \|_{L^2(\Omega)}\\
		&+C\left \|\int_{\Gamma^{\tau}_{\theta,\kappa}}e^{zt_{n}}
\left (((\beta_{\tau,k}(z))^{\alpha}+A)^{-1}-((\beta_{\tau,k}(z))^{\alpha}+A_{h})^{-1}P_{h}\right )
\right.
\\
		&
\cdot \left.
\sum_{l=1}^{k-2}\left ((\beta_{\tau,k}(z))^{\alpha-1}\eta_{k,l}(e^{-z\tau})\partial^{l}_{t}f(0)\right )dz\right \|_{L^2(\Omega)}\\
		&+C\left \|\int_{\Gamma^{\tau}_{\theta,\kappa}}e^{zt_{n}}\left (((\beta_{\tau,k}(z))^{\alpha}+A)^{-1}-((\beta_{\tau,k}(z))^{\alpha}+A_{h})^{-1}P_{h}\right )
\right.
\\
		&
\cdot \left.
\left ((\beta_{\tau,k}(z))^{\alpha-1}\tau\sum_{n=1}^{\infty}R^{n}_{k}e^{-zt_{n}}\right )dz\right \|_{L^2(\Omega)}.
		\end{aligned}
	\end{equation*}
	Using Lemmas \ref{lemmabetatauOk} and \ref{lemeroper} lead to
	\begin{equation*}
		\|G^{n}-G^{n}_{h}\|_{L^2(\Omega)}\leq Ch^{2}t_{n}^{-\alpha}\|G_{0}\|_{L^2(\Omega)}+Ch^{2}\sum_{l=0}^{k-1}\|\partial^{l}_{t}f(0)\|_{L^2(\Omega)}+Ch^{2}\int_{0}^{t_{n}}\left \|\partial^{k}_{t}f(s)\right \|_{L^2(\Omega)}ds.
	\end{equation*}
	Similarly, we have
	\begin{equation*}
	\|G^{n}-G^{n}_{h}\|_{\dot{H}^1(\Omega)}\leq Cht_{n}^{-\alpha}\|G_{0}\|_{L^2(\Omega)}+Ch\sum_{l=0}^{k-1}\|\partial^{l}_{t}f(0)\|_{L^2(\Omega)}+Ch\int_{0}^{t_{n}}\left \|\partial^{k}_{t}f(s)\right \|_{L^2(\Omega)}ds.
	\end{equation*}
\end{proof}

Lastly, according to Theorems \ref{thmsemierrorOk} and \ref{thmfullerrorO2}, the following error estimates of the fully discrete scheme are reached.
\begin{theorem}\label{thmfull}
	Let $G(t)$ and $G_{h}^{n}$  be the solutions of Eqs. \eqref{eqretosol} and \eqref{eqfullscheme} respectively and assume $G_0\in L^2(\Omega)$,  $f\in C^{k-1}([0,T],L^{2}(\Omega))$, $\int_{0}^{t}(t-s)^{-\alpha}\|f(s)\|_{L^2(\Omega)}ds<\infty$, $\int_{0}^{t}\|\partial^{k}_{t}f(s)\|_{L^2(\Omega)}ds<\infty$, and $U(x_{0})$ is bounded in $\bar{\Omega}$. Then one has
	\begin{equation}
	\begin{aligned}
	\|G(t_{n})-G_{h}^{n}\|_{L^2(\Omega)}\leq& Ch^{2}t_{n}^{-\alpha}\|G_{0}\|_{L^2(\Omega)}+Ch^{2}\sum_{l=0}^{k-1}\|\partial^{l}_{t}f(0)\|_{L^2(\Omega)}\\
&+Ch^{2}\int_{0}^{t_{n}}\left \|\partial^{k}_{t}f(s)\right \|_{L^2(\Omega)}ds\\
	&+Ct_n^{-k}\tau^{k}\|G_0\|_{L^2(\Omega)}+C\tau^{k}\sum_{l=0}^{k-1}t_{n}^{l+1-k}\|\partial^{l}_{t}f_{0}\|_{L^2(\Omega)}\\
	&+C\tau^{k}\int_{0}^{t_{n}}\|\partial^{k}_{t}f(s)\|_{L^2(\Omega)}ds.
	\end{aligned}
	\end{equation}
\end{theorem}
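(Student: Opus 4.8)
The plan is to insert the time semi-discrete solution $G^{n}$ of Eq.~\eqref{eqsemisch} as an intermediate quantity and split the total error by the triangle inequality,
\begin{equation*}
\|G(t_{n})-G^{n}_{h}\|_{L^2(\Omega)}\leq \|G(t_{n})-G^{n}\|_{L^2(\Omega)}+\|G^{n}-G^{n}_{h}\|_{L^2(\Omega)},
\end{equation*}
so that the first term is a purely temporal error and the second a purely spatial one. First I would verify that the standing hypotheses of Theorem~\ref{thmfull} — $G_{0}\in L^{2}(\Omega)$, $f\in C^{k-1}([0,T],L^{2}(\Omega))$, $\int_{0}^{t}\|\partial^{k}_{t}f(s)\|_{L^2(\Omega)}ds<\infty$, and $U(x_{0})$ bounded in $\bar{\Omega}$ — imply the hypotheses needed by Theorems~\ref{thmsemierrorOk} and~\ref{thmfullerrorO2}; the additional assumption $\int_{0}^{t}(t-s)^{-\alpha}\|f(s)\|_{L^2(\Omega)}ds<\infty$ is used only to guarantee, via Theorem~\ref{thmreg} and the representation \eqref{eqsolresp1}, that $G(t)$ itself is well defined.

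Next I would apply Theorem~\ref{thmsemierrorOk} to bound $\|G(t_{n})-G^{n}\|_{L^2(\Omega)}$ by
\begin{equation*}
Ct_n^{-k}\tau^{k}\|G_0\|_{L^2(\Omega)}+C\tau^{k}\sum_{l=0}^{k-1}t^{l+1-k}_{n}\|\partial^{l}_{t}f(0)\|_{L^2(\Omega)}+C\tau^{k}\int_{0}^{t_{n}}\|\partial^{k}_{t}f(s)\|_{L^2(\Omega)}ds,
\end{equation*}
and apply Theorem~\ref{thmfullerrorO2} (in the $L^{2}$ part; the $\dot H^{1}$ companion estimate is not needed here) to bound $\|G^{n}-G^{n}_{h}\|_{L^2(\Omega)}$ by
\begin{equation*}
Ch^{2}t_{n}^{-\alpha}\|G_0\|_{L^2(\Omega)}+Ch^{2}\sum_{l=0}^{k-1}\|\partial^{l}_{t}f(0)\|_{L^2(\Omega)}+Ch^{2}\int_{0}^{t_{n}}\|\partial^{k}_{t}f(s)\|_{L^2(\Omega)}ds.
\end{equation*}
Summing these two bounds yields exactly the asserted estimate.

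There is essentially no analytical obstacle remaining: the contour representations \eqref{eqtimesemidissol} and \eqref{eqfulldissol}, the resolvent and symbol estimates of Lemmas~\ref{lemmaBeta}, \ref{lemmabetatauOk}, \ref{lemO200}, the consistency bounds \eqref{equreqest} for $\mu_{k}$, $\eta_{k,l}$ and $\beta_{\tau,k}$, and the finite element estimate of Lemma~\ref{lemeroper} were all already established in the proofs of Theorems~\ref{thmsemierrorOk} and~\ref{thmfullerrorO2}. The only point that deserves attention is the bookkeeping of the slightly different regularity demands of the two cited theorems and the observation that the single object $G^{n}$ defined by \eqref{eqsemisch}, with integral representation \eqref{eqtimesemidissol}, serves simultaneously as the reference solution in the spatial comparison and as the approximant in the temporal comparison, which is precisely what makes the two-step triangle inequality legitimate.
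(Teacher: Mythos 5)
Your proposal is correct and matches the paper exactly: the paper gives no separate proof of Theorem \ref{thmfull}, simply noting that it follows by combining Theorems \ref{thmsemierrorOk} and \ref{thmfullerrorO2} through the intermediate semi-discrete solution $G^{n}$, which is precisely your triangle-inequality argument. Your remark about the slight mismatch in regularity hypotheses (Theorem \ref{thmfullerrorO2} formally asks for $f\in C^{k}$ while Theorem \ref{thmfull} assumes only $f\in C^{k-1}$ together with $\int_{0}^{t}\|\partial^{k}_{t}f(s)\|_{L^2(\Omega)}ds<\infty$) is a fair observation about the paper's bookkeeping rather than a gap in your argument.
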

\section{Numerical experiments}
In this section, we perform the numerical experiments to verify the effectiveness of the designed schemes. Since the exact solution $G$ is unknown, to test the space convergence rates, we denote by 
\begin{equation*}
\begin{aligned}
E_{h}=\|G^{n}_{h}-G^{n}_{h/2}\|_{L^2(\Omega)},
\end{aligned}
\end{equation*}
where $G^n_{h}$ means the numerical solution of $G$ at time $t_n$ with mesh size $h$; similarly, to test the temporal convergence rates, we take
\begin{equation*}
\begin{aligned}
E_{\tau}=\|G_{\tau}-G_{\tau/2}\|_{L^2(\Omega)},
\end{aligned}
\end{equation*}
where $G_{\tau}$ is the numerical solution of $G$ at the fixed time $t$ with step size $\tau$. The spatial and temporal convergence rates can be, respectively, obtained by calculating
\begin{equation*}
{\rm Rate}=\frac{\ln(E_{h}/E_{h/2})}{\ln(2)},\quad {\rm Rate}=\frac{\ln(E_{\tau}/E_{\tau/2})}{\ln(2)}.
\end{equation*}
For convience, we take $\Omega=(0,1)$ and choose $T=1$ as a terminal time in the following examples. And all the computations are carried out in Julia 1.4.3 on a personal laptop. To observe the temporal convergence rates clearly, we use 80-bit precision float to do the computation and save the data.
\begin{example}
In this example, we take $\rho=-1$,
\begin{equation*}
	f(x_0,\rho,t)=0,\quad G(x_{0},\rho,0)=x_{0}(1-x_{0})~~{\rm and}~~U(x_{0})=\chi_{(0.5,1)}(x_{0}),
\end{equation*}	
where $\chi_{(a,b)}$ denotes the characteristic function on $(a,b)$. To investigate the convergence in temporal direction and
eliminate the influence from spatial discretization, we take $h=1/100$. The corresponding temporal errors and convergence rates are presented in Table \ref{Tab:homtime} for scheme \eqref{eqfullscheme}. All the convergence rates are steady and can reach up to order $6$.
\begin{table}[htbp]
	\caption{Temporal errors and convergence rates}
	\begin{tabular}{|c|c|ccccc|c|}
		\hline
		$\alpha$ & $k\backslash 1/\tau$ & 50 & 100 & 200 & 400 & 800 & Rate \\
		\hline
	& 2 & 1.3916E-06 & 3.4220E-07 & 8.4846E-08 & 2.1124E-08 & 5.2703E-09&$\approx2.0029$ \\
	& 3 & 6.6959E-08 & 8.0530E-09 & 9.8763E-10 & 1.2229E-10 & 1.5214E-11&$\approx3.0068$ \\
	0.3 & 4 & 4.5036E-09 & 2.6218E-10 & 1.5824E-11 & 9.7204E-13 & 6.0232E-14&$\approx4.0124$ \\
	& 5 & 4.1599E-10 & 1.1158E-11 & 3.2977E-13 & 1.0025E-14 & 3.0900E-16&$\approx5.0197$ \\
	& 6 & 3.6025E-07 & 4.3156E-11 & 8.5287E-15 & 1.2764E-16 & 1.9547E-18&$\approx6.0290$ \\
	\hline
	& 2 & 3.6346E-06 & 8.8919E-07 & 2.1988E-07 & 5.4670E-08 & 1.3630E-08&$\approx2.0039$ \\
	& 3 & 2.1375E-07 & 2.5479E-08 & 3.1112E-09 & 3.8441E-10 & 4.7774E-11&$\approx3.0083$ \\
	0.7 & 4 & 1.6696E-08 & 9.6070E-10 & 5.7657E-11 & 3.5318E-12 & 2.1854E-13&$\approx4.0144$ \\
	& 5 & 1.8665E-09 & 4.5724E-11 & 1.3418E-12 & 4.0650E-14 & 1.2509E-15&$\approx5.0222$ \\
	& 6 & 2.7567E-06 & 3.1378E-09 & 3.7837E-14 & 5.6558E-16 & 8.6440E-18&$\approx6.0318$ \\
		\hline
	\end{tabular}
	\label{Tab:homtime}
\end{table}
	
\end{example}

\begin{example}
	Here, we take $\rho=-1$,
	\begin{equation*}
	f(x_0,\rho,t)=x_{0}(1-x_{0})e^{-\rho\chi_{(0.5,1)}(x_{0})t},\quad G(x_{0},\rho,0)=0~~{\rm and}~~U(x_{0})=\chi_{(0.5,1)}(x_{0}).
	\end{equation*}	
	 To avoid the influence on temporal errors from the spatial discretization, we choose $h=1/100$. We use \eqref{eqfullscheme} to solve \eqref{eqretosol} and present the corresponding temporal errors and convergence rates in Table \ref{Tab:Nontime}.  The convergence rates are steady and can reach up to order $6$.
	\begin{table}[htbp]
		\caption{Temporal errors and convergence rates}
		\begin{tabular}{|c|c|ccccc|c|}
			\hline
			$\alpha$ & $k\backslash 1/\tau$ & 50 & 100 & 200 & 400 & 800 & Rate \\
			\hline
			& 2 & 1.8340E-06 & 4.5996E-07 & 1.1517E-07 & 2.8817E-08 & 7.2072E-09 & $\approx$ 1.9994 \\
			& 3 & 3.0701E-08 & 3.8758E-09 & 4.8686E-10 & 6.1006E-11 & 7.6351E-12 & $\approx$ 2.9982 \\
			0.4 & 4 & 4.5287E-10 & 2.8923E-11 & 1.8274E-12 & 1.1484E-13 & 7.1968E-15 & $\approx$ 3.9961 \\
			& 5 & 1.9285E-11 & 7.6798E-13 & 2.3449E-14 & 7.2459E-16 & 2.2518E-17 & $\approx$ 5.0080 \\
			& 6 & 9.3685E-08 & 2.6523E-11 & 3.5161E-16 & 5.4074E-18 & 8.2994E-20 & $\approx$ 6.0258 \\
			\hline
			& 2 & 7.6913E-07 & 1.9366E-07 & 4.8588E-08 & 1.2169E-08 & 3.0449E-09 & $\approx$ 1.9987 \\
			& 3 & 2.5894E-08 & 3.2146E-09 & 4.0050E-10 & 4.9982E-11 & 6.2428E-12 & $\approx$ 3.0011 \\
			0.6 & 4 & 4.7283E-10 & 2.6989E-11 & 1.6111E-12 & 9.8392E-14 & 6.0786E-15 & $\approx$ 4.0167 \\
			& 5 & 6.1981E-11 & 1.7996E-12 & 5.3852E-14 & 1.6473E-15 & 5.0933E-17 & $\approx$ 5.0153 \\
			& 6 & 6.2004E-08 & 6.2135E-11 & 1.1952E-15 & 1.8010E-17 & 2.7633E-19 & $\approx$ 6.0262 \\
			\hline
		\end{tabular}
		\label{Tab:Nontime}
	\end{table}
\end{example}

\begin{example}
In this example, we take $\rho=-1+\pi\mathbf{i}$,
\begin{equation*}
\begin{aligned}
&f(x_0,\rho,t)=0,\quad G(x_{0},\rho,0)=-5 \chi_{(0,0.5)}(x_{0})+5 \chi_{(0.5,1)}(x_{0}),\\
&\qquad{\rm and}~~U(x_{0})=3 (x_{0}+0.5)^5 \chi_{(0,0.5)}(x_{0}).
\end{aligned}
\end{equation*}	
We choose $\tau=1/200$ to decrease the errors caused by temporal discretizations.
We use \eqref{eqfullscheme} to solve Eq. \eqref{eqretosol} and present the $L^{2}$- and $H^{1}$-norm errors and convergence rates in Tables \ref{Tab:homspaceL2} and \ref{Tab:homspaceH1}, respectively. All the convergence rates are consistent with the predicted results.

	\begin{table}[htbp]
		\caption{$L^{2}$-norm errors in space and convergence rates}
		\begin{tabular}{|c|c|ccccc|c|}
			\hline
			$\alpha$ & $k\backslash 1/h$ & 20 & 40 & 80 & 160 & 4096 & Rate \\
			\hline
			& 2 & 7.0515E-06 & 1.7618E-06 & 4.4038E-07 & 1.1012E-07 & 2.7523E-08 & $\approx$ 2.0003 \\
			& 3 & 7.0516E-06 & 1.7618E-06 & 4.4039E-07 & 1.1007E-07 & 2.7470E-08 & $\approx$ 2.0024 \\
			0.3 & 4 & 7.0516E-06 & 1.7618E-06 & 4.4038E-07 & 1.1008E-07 & 2.7469E-08 & $\approx$ 2.0026 \\
			& 5 & 7.0516E-06 & 1.7618E-06 & 4.4039E-07 & 1.1009E-07 & 2.7578E-08 & $\approx$ 1.9971 \\
			& 6 & 7.0516E-06 & 1.7618E-06 & 4.4039E-07 & 1.1011E-07 & 2.7419E-08 & $\approx$ 2.0056 \\
			\hline
			& 2 & 2.0656E-06 & 5.1611E-07 & 1.2901E-07 & 3.2251E-08 & 8.0774E-09 & $\approx$ 1.9974 \\
			& 3 & 2.0656E-06 & 5.1611E-07 & 1.2901E-07 & 3.2251E-08 & 8.0778E-09 & $\approx$ 1.9973 \\
			0.8 & 4 & 2.0656E-06 & 5.1611E-07 & 1.2901E-07 & 3.2248E-08 & 8.0629E-09 & $\approx$ 1.9998 \\
			& 5 & 2.0656E-06 & 5.1611E-07 & 1.2901E-07 & 3.2248E-08 & 8.0769E-09 & $\approx$ 1.9973 \\
			& 6 & 2.0656E-06 & 5.1611E-07 & 1.2901E-07 & 3.2248E-08 & 8.0928E-09 & $\approx$ 1.9945 \\
			\hline
		\end{tabular}
		\label{Tab:homspaceL2}
	\end{table}
	
	\begin{table}[htbp]
		\caption{$H^{1}$-norm errors in space and convergence rates}
		\begin{tabular}{|c|c|ccccc|c|}
			\hline
			$\alpha$ & $k\backslash 1/h$ & 256 & 512 & 1024 & 2048 & 4096 & Rate \\
			\hline
			& 2 & 6.2678E-03 & 3.1320E-03 & 1.5657E-03 & 7.8284E-04 & 3.9142E-04 & $\approx$ 1.0000 \\
			& 3 & 6.2678E-03 & 3.1320E-03 & 1.5658E-03 & 7.8285E-04 & 3.9142E-04 & $\approx$ 1.0000 \\
			0.3 & 4 & 6.2678E-03 & 3.1320E-03 & 1.5658E-03 & 7.8285E-04 & 3.9142E-04 & $\approx$ 1.0000 \\
			& 5 & 6.2678E-03 & 3.1320E-03 & 1.5658E-03 & 7.8285E-04 & 3.9142E-04 & $\approx$ 1.0000 \\
			& 6 & 6.2678E-03 & 3.1320E-03 & 1.5658E-03 & 7.8285E-04 & 3.9142E-04 & $\approx$ 1.0000 \\
			\hline
			& 2 & 1.7994E-03 & 8.9919E-04 & 4.4953E-04 & 2.2476E-04 & 1.1238E-04 & $\approx$ 1.0000 \\
			& 3 & 1.7994E-03 & 8.9919E-04 & 4.4953E-04 & 2.2476E-04 & 1.1238E-04 & $\approx$ 1.0000 \\
			0.8 & 4 & 1.7994E-03 & 8.9919E-04 & 4.4953E-04 & 2.2476E-04 & 1.1238E-04 & $\approx$ 1.0000 \\
			& 5 & 1.7994E-03 & 8.9919E-04 & 4.4953E-04 & 2.2476E-04 & 1.1238E-04 & $\approx$ 1.0000 \\
			& 6 & 1.7994E-03 & 8.9919E-04 & 4.4953E-04 & 2.2476E-04 & 1.1238E-04 & $\approx$ 1.0000 \\
			\hline
		\end{tabular}
		\label{Tab:homspaceH1}
	\end{table}
	
	Furthermore, to show the effectiveness of our scheme and the significance of corrections for all steps, we provide another comparative example. Applying the correction scheme provided in \cite{jin2017} to our problem and taking $L^{2}$ projection on $e^{-t_{n}\rho U(x_{0})}G^{0}$, then one has the fully discrete scheme 
	\begin{equation}\label{eqfullhonw1}
	\left\{
	\begin{aligned}
	&\sum_{i=0}^{n-1}d^{\alpha,k}_{i}(e^{-t_{i}\rho U(x_{0})}G^{n-i}_{h} ,v_{h})+(A_{h} G^{n}_{h}, v_{h})+a^{(k)}_{n}(A_{h}P_{h}(e^{-t_{n}\rho U(x_{0})}G^{0}),v_{h})\\
	&\qquad=\sum_{i=0}^{n-1}d^{\alpha,k}_{i}(P_{h}(e^{-t_{n}\rho U(x_{0})}G^{0}),v_{h}) \qquad \forall v_{h}\in X_{h},\quad 1\leq n\leq k-1,\\
	&\sum_{i=0}^{n-1}d^{\alpha,k}_{i}(e^{-t_{i}\rho U(x_{0})}G^{n-i}_{h} ,v_{h})+(A_{h} G^{n}_{h}, v_{h})\\
	&\qquad=\sum_{i=0}^{n-1}d^{\alpha,k}_{i}(P_{h}(e^{-t_{n}\rho U(x_{0})}G^{0}),v_{h}) \qquad \forall v_{h}\in X_{h},\quad n\geq k,\\
	\end{aligned}\right.
	\end{equation}
	 And the source term $f$, initial data $G(x_{0},\rho,0)$, and $U(x_{0})$ are taken to be the same as the immediately above example. The corresponding $L^{2}$- and $H^{1}$-norm errors and convergence rates are presented in Tables \ref{Tab:homspaceL2w} and \ref{Tab:homspaceH1w}. It's easy to find that the convergence rates of $H^{1}$-norm errors are optimal but the convergence rates of $L^{2}$-norm errors can't achieve $O(h^{2})$. 

	\begin{table}[htbp]
		\caption{$L^{2}$-norm errors in space and convergence rates}
		\begin{tabular}{|c|c|ccccc|c|}
			\hline
			$\alpha$ & $k\backslash 1/h$ & 256 & 512 & 1024 & 2048 & 4096 & Rate \\
			\hline
			& 2 & 6.5612E-06 & 2.1429E-06 & 1.0074E-06 & 5.1449E-07 & 2.6272E-07 & $\approx$ 0.9696 \\
			& 3 & 1.4812E-05 & 7.5213E-06 & 3.8401E-06 & 1.9463E-06 & 9.8064E-07 & $\approx$ 0.9889 \\
			0.3 & 4 & 3.1889E-05 & 1.6237E-05 & 8.2196E-06 & 4.1387E-06 & 2.0772E-06 & $\approx$ 0.9945 \\
			& 5 & 5.4773E-05 & 2.7724E-05 & 1.3968E-05 & 7.0136E-06 & 3.5144E-06 & $\approx$ 0.9969 \\
			& 6 & 8.2820E-05 & 4.1748E-05 & 2.0980E-05 & 1.0520E-05 & 5.2679E-06 & $\approx$ 0.9978 \\
			\hline
			& 2 & 2.9559E-06 & 1.3904E-06 & 7.0643E-07 & 3.6011E-07 & 1.8223E-07 & $\approx$ 0.9827 \\
			& 3 & 1.0324E-05 & 5.2570E-06 & 2.6619E-06 & 1.3405E-06 & 6.7271E-07 & $\approx$ 0.9947 \\
			0.8 & 4 & 2.2209E-05 & 1.1224E-05 & 5.6481E-06 & 2.8338E-06 & 1.4195E-06 & $\approx$ 0.9974 \\
			& 5 & 3.7832E-05 & 1.9034E-05 & 9.5523E-06 & 4.7858E-06 & 2.3953E-06 & $\approx$ 0.9985 \\
			& 6 & 5.6851E-05 & 2.8532E-05 & 1.4299E-05 & 7.1590E-06 & 3.5818E-06 & $\approx$ 0.9991 \\
			\hline
		\end{tabular}
		\label{Tab:homspaceL2w}
	\end{table}

		\begin{table}[htbp]
		\caption{$H^{1}$-norm errors in space and convergence rates}
		\begin{tabular}{|c|c|ccccc|c|}
			\hline
			$\alpha$ & $k\backslash 1/h$ & 256 & 512 & 1024 & 2048 & 4096 & Rate \\
			\hline
			& 2 & 6.2678E-03 & 3.1320E-03 & 1.5658E-03 & 7.8285E-04 & 3.9142E-04 & $\approx$ 1.0000 \\
			& 3 & 6.2550E-03 & 3.1256E-03 & 1.5625E-03 & 7.8124E-04 & 3.9062E-04 & $\approx$ 1.0000 \\
			0.3 & 4 & 6.2357E-03 & 3.1159E-03 & 1.5577E-03 & 7.7883E-04 & 3.8941E-04 & $\approx$ 1.0000 \\
			& 5 & 6.2109E-03 & 3.1036E-03 & 1.5515E-03 & 7.7574E-04 & 3.8787E-04 & $\approx$ 1.0000 \\
			& 6 & 6.1818E-03 & 3.0890E-03 & 1.5443E-03 & 7.7210E-04 & 3.8604E-04 & $\approx$ 1.0000 \\
			\hline
			& 2 & 1.7995E-03 & 8.9920E-04 & 4.4953E-04 & 2.2476E-04 & 1.1238E-04 & $\approx$ 1.0000 \\
			& 3 & 1.7884E-03 & 8.9367E-04 & 4.4677E-04 & 2.2338E-04 & 1.1169E-04 & $\approx$ 1.0000 \\
			0.8 & 4 & 1.7722E-03 & 8.8558E-04 & 4.4272E-04 & 2.2135E-04 & 1.1068E-04 & $\approx$ 1.0000 \\
			& 5 & 1.7524E-03 & 8.7568E-04 & 4.3777E-04 & 2.1888E-04 & 1.0944E-04 & $\approx$ 1.0000 \\
			& 6 & 1.7306E-03 & 8.6475E-04 & 4.3231E-04 & 2.1615E-04 & 1.0807E-04 & $\approx$ 1.0000 \\
			\hline
		\end{tabular}
		\label{Tab:homspaceH1w}
	\end{table}
\end{example}

\begin{example}
	We take $\rho=-1+\mathbf{i}$,
	\begin{equation*}
	f(x_0,\rho,t)=x_{0}(1-x_{0})e^{-\rho\chi_{(0.5,1)}(x_{0})t},\quad G(x_{0},\rho,0)=0,~~{\rm and}~~U(x_{0})=\chi_{(0.5,1)}(x_{0}).
	\end{equation*}	
	We choose $\tau=1/200$ to decrease the influence caused by temporal discretizations. Use \eqref{eqfullscheme} to solve \eqref{eqretosol} and present the corresponding temporal errors and convergence rates in Tables \ref{Tab:NonspaceL2} and \ref{Tab:NonspaceH1}. All the  results agree with the predictions.
	\begin{table}[htbp]
		\caption{$L^{2}$-norm errors in space and convergence rates}
		\begin{tabular}{|c|c|ccccc|c|}
			\hline
			$\alpha$ & $k\backslash 1/h$ & 20 & 40 & 80 & 160 & 320 & Rate \\
			\hline
			& 2 & 3.3965E-05 & 8.4921E-06 & 2.1231E-06 & 5.3077E-07 & 1.3269E-07 & $\approx$ 2.0000 \\
			& 3 & 3.3964E-05 & 8.4919E-06 & 2.1230E-06 & 5.3076E-07 & 1.3269E-07 & $\approx$ 2.0000 \\
			0.3 & 4 & 3.3964E-05 & 8.4918E-06 & 2.1230E-06 & 5.3076E-07 & 1.3269E-07 & $\approx$ 2.0000 \\
			& 5 & 3.3964E-05 & 8.4918E-06 & 2.1230E-06 & 5.3076E-07 & 1.3269E-07 & $\approx$ 2.0000 \\
			& 6 & 3.3964E-05 & 8.4919E-06 & 2.1230E-06 & 5.3076E-07 & 1.3269E-07 & $\approx$ 2.0000 \\
			\hline
			& 2 & 3.3965E-05 & 8.4921E-06 & 2.1231E-06 & 5.3077E-07 & 1.3269E-07 & $\approx$ 2.0000 \\
			& 3 & 3.3964E-05 & 8.4919E-06 & 2.1230E-06 & 5.3076E-07 & 1.3269E-07 & $\approx$ 2.0000 \\
			0.6 & 4 & 3.3964E-05 & 8.4918E-06 & 2.1230E-06 & 5.3076E-07 & 1.3269E-07 & $\approx$ 2.0000 \\
			& 5 & 3.3964E-05 & 8.4918E-06 & 2.1230E-06 & 5.3076E-07 & 1.3269E-07 & $\approx$ 2.0000 \\
			& 6 & 3.3964E-05 & 8.4919E-06 & 2.1230E-06 & 5.3076E-07 & 1.3269E-07 & $\approx$ 2.0000 \\
			\hline
		\end{tabular}
		\label{Tab:NonspaceL2}
	\end{table}
	
	\begin{table}[htbp]
		\caption{$H^{1}$-norm errors in space and convergence rates}
		\begin{tabular}{|c|c|ccccc|c|}
			\hline
			$\alpha$ & $k\backslash 1/h$ & 20 & 40 & 80 & 160 & 320 & Rate \\
			\hline
			& 2 & 2.5503E-03 & 1.2756E-03 & 6.3784E-04 & 3.1893E-04 & 1.5947E-04 & $\approx$ 1.0000 \\
			& 3 & 2.5502E-03 & 1.2755E-03 & 6.3783E-04 & 3.1892E-04 & 1.5946E-04 & $\approx$ 1.0000 \\
			0.3 & 4 & 2.5502E-03 & 1.2755E-03 & 6.3783E-04 & 3.1892E-04 & 1.5946E-04 & $\approx$ 1.0000 \\
			& 5 & 2.5502E-03 & 1.2755E-03 & 6.3783E-04 & 3.1892E-04 & 1.5946E-04 & $\approx$ 1.0000 \\
			& 6 & 2.5502E-03 & 1.2755E-03 & 6.3783E-04 & 3.1892E-04 & 1.5946E-04 & $\approx$ 1.0000 \\
			\hline
			& 2 & 2.5503E-03 & 1.2756E-03 & 6.3784E-04 & 3.1893E-04 & 1.5947E-04 & $\approx$ 1.0000 \\
			& 3 & 2.5502E-03 & 1.2755E-03 & 6.3783E-04 & 3.1892E-04 & 1.5946E-04 & $\approx$ 1.0000 \\
			0.6 & 4 & 2.5502E-03 & 1.2755E-03 & 6.3783E-04 & 3.1892E-04 & 1.5946E-04 & $\approx$ 1.0000 \\
			& 5 & 2.5502E-03 & 1.2755E-03 & 6.3783E-04 & 3.1892E-04 & 1.5946E-04 & $\approx$ 1.0000 \\
			& 6 & 2.5502E-03 & 1.2755E-03 & 6.3783E-04 & 3.1892E-04 & 1.5946E-04 & $\approx$ 1.0000 \\
			\hline
		\end{tabular}
		\label{Tab:NonspaceH1}
	\end{table}
	
	\begin{table}[htbp]
		\caption{$L^{2}$-norm errors in space and convergence rates}
		\begin{tabular}{|c|c|ccccc|c|}
			\hline
			$\alpha$ & $k\backslash 1/h$ & 20 & 40 & 80 & 160 & 320 & Rate \\
			\hline
			& 2 & 9.0942E-06 & 6.0132E-06 & 5.5959E-06 & 3.3906E-06 & 1.8330E-06 & $\approx$ 0.8873 \\
			& 3 & 9.0858E-06 & 6.0174E-06 & 5.5979E-06 & 3.3916E-06 & 1.8335E-06 & $\approx$ 0.8874 \\
			0.3 & 4 & 9.0857E-06 & 6.0174E-06 & 5.5980E-06 & 3.3916E-06 & 1.8335E-06 & $\approx$ 0.8874 \\
			& 5 & 9.0857E-06 & 6.0174E-06 & 5.5980E-06 & 3.3916E-06 & 1.8335E-06 & $\approx$ 0.8874 \\
			& 6 & 9.0857E-06 & 6.0174E-06 & 5.5980E-06 & 3.3916E-06 & 1.8335E-06 & $\approx$ 0.8874 \\
			\hline
			& 2 & 9.0942E-06 & 6.0132E-06 & 5.5959E-06 & 3.3906E-06 & 1.8330E-06 & $\approx$ 0.8873 \\
			& 3 & 9.0858E-06 & 6.0174E-06 & 5.5979E-06 & 3.3916E-06 & 1.8335E-06 & $\approx$ 0.8874 \\
			0.6 & 4 & 9.0857E-06 & 6.0174E-06 & 5.5980E-06 & 3.3916E-06 & 1.8335E-06 & $\approx$ 0.8874 \\
			& 5 & 9.0857E-06 & 6.0174E-06 & 5.5980E-06 & 3.3916E-06 & 1.8335E-06 & $\approx$ 0.8874 \\
			& 6 & 9.0857E-06 & 6.0174E-06 & 5.5980E-06 & 3.3916E-06 & 1.8335E-06 & $\approx$ 0.8874 \\
			\hline
		\end{tabular}
		\label{Tab:NonspaceL2w}
	\end{table}
	
	\begin{table}[htbp]
		\caption{$H^{1}$-norm errors in space and convergence rates}
		\begin{tabular}{|c|c|ccccc|c|}
			\hline
			$\alpha$ & $k\backslash 1/h$ & 20 & 40 & 80 & 160 & 320 & Rate \\
			\hline
			& 2 & 2.5451E-03 & 1.2730E-03 & 6.3657E-04 & 3.1829E-04 & 1.5915E-04 & $\approx$ 1.0000 \\
			& 3 & 2.5450E-03 & 1.2730E-03 & 6.3655E-04 & 3.1828E-04 & 1.5914E-04 & $\approx$ 1.0000 \\
			0.3 & 4 & 2.5450E-03 & 1.2730E-03 & 6.3655E-04 & 3.1828E-04 & 1.5914E-04 & $\approx$ 1.0000 \\
			& 5 & 2.5450E-03 & 1.2730E-03 & 6.3655E-04 & 3.1828E-04 & 1.5914E-04 & $\approx$ 1.0000 \\
			& 6 & 2.5450E-03 & 1.2730E-03 & 6.3655E-04 & 3.1828E-04 & 1.5914E-04 & $\approx$ 1.0000 \\
			\hline
			& 2 & 2.5451E-03 & 1.2730E-03 & 6.3657E-04 & 3.1829E-04 & 1.5915E-04 & $\approx$ 1.0000 \\
			& 3 & 2.5450E-03 & 1.2730E-03 & 6.3655E-04 & 3.1828E-04 & 1.5914E-04 & $\approx$ 1.0000 \\
			0.6 & 4 & 2.5450E-03 & 1.2730E-03 & 6.3655E-04 & 3.1828E-04 & 1.5914E-04 & $\approx$ 1.0000 \\
			& 5 & 2.5450E-03 & 1.2730E-03 & 6.3655E-04 & 3.1828E-04 & 1.5914E-04 & $\approx$ 1.0000 \\
			& 6 & 2.5450E-03 & 1.2730E-03 & 6.3655E-04 & 3.1828E-04 & 1.5914E-04 & $\approx$ 1.0000 \\
			\hline
		\end{tabular}
		\label{Tab:NonspaceH1w}
	\end{table}
To show the differences between the two different projections, we also compute this example with the numerical scheme \eqref{eqfullnonwrong}, i.e., we take $L^{2}$ projection on $f$ directly,
	\begin{equation}\label{eqfullnonwrong}
	\begin{aligned}
	&\sum_{i=0}^{n-1}d^{\alpha,k}_{i}(e^{-t_{i}\rho U(x_{0})}G^{n-i}_{h} ,v_{h})+(A_{h} G^{n}_{h}, v_{h})=\sum_{i=0}^{n-1}d^{\alpha-1,k}_{i}(e^{-t_{i}\rho U(x_{0})}P_{h}f^{n-i},v_{h})\\
	&\qquad\quad+\sum_{j=1}^{k-1}a^{(k)}_{j}d^{\alpha-1,k}_{n-j}(e^{-t_{n-j}\rho U(x_{0})}P_{h}f^{0},v_{h})\\
	&\qquad\quad+\sum_{l=1}^{k-2}\sum_{j=1}^{k-1}b^{(k)}_{l,j}\tau^{l}d^{\alpha-1,k}_{n-j}(e^{-t_{n-j}\rho U(x_{0})}P_{h}\partial^{l}_{t}f(0),v_{h}) \qquad \forall v_{h}\in X_{h}.
	\end{aligned}
	\end{equation}
The relevant $L^{2}$- and $H^{1}$-norm errors and convergence rates are presented in Tables \ref{Tab:NonspaceL2w} and \ref{Tab:NonspaceH1w}, which show that the numerical scheme \eqref{eqfullnonwrong} delivers an $O(h)$ accuracy in $H^{1}$-norm and only about $O(h^{0.9})$ accuracy in $L^{2}$-norm. Thus, according to the results in Tables \ref{Tab:NonspaceL2} and \ref{Tab:NonspaceL2w}, our schemes can solve Eq. \eqref{eqretosol} more effectively.
	
\end{example}

\section{Conclusions}

Functional, as an important class of statistical observables, plays a key role in uncovering the mechanism of anomalous dynamics and extending their applications. The probability density function of the functional for anomalous dynamics is governed by fractional Feynman-Kac equation. The time-space coupled operator of the equation, the possible low regularity of the functional, and the complex variables bring the challenges in effectively solving the equation. This paper carefully designs the numerical schemes, which can achieve the optimal time convergence rates up to order $6$ and optical space convergence rate without any regularity assumptions on the solution. The convergence results are theoretically proved and numerically confirmed. More numerical experiments are also performed to show the benefits of the schemes presented in this paper comparing with the existing ones in solving the fractional Feynman-Kac equation.


\bibliographystyle{siamplain}
\bibliography{references}

\begin{thebibliography}{10}

\bibitem{Carmi2010}
{\sc S.~Carmi, L.~Turgeman, and E.~Barkai}, {\em On distributions of
  functionals of anomalous diffusion paths}, J. Stat. Phys., 141 (2010),
  pp.~1071--1092.

\bibitem{Chenminghua:15}
{\sc M.~Chen and W.~Deng}, {\em High order algorithms for the fractional
  substantial diffusion equation with truncated {L}{\'e}vy flights}, SIAM J.
  Sci. Comput., 37 (2015), pp.~A890--A917.

\bibitem{ChenDeng:18}
{\sc M.~Chen and W.~Deng}, {\em High order algorithm for the time-tempered
  fractional {Feynman}-{Kac} equation}, J. Sci. Comput., 76 (2018), pp.~1--21.

\bibitem{chen2015}
{\sc S.~Chen, J.~Shen, and L.~Wang}, {\em Generalized {Jacobi} functions and
  their applications to fractional differential equations}, Math. Comp., 85
  (2015), pp.~1603--1638.

\bibitem{creedon:1975}
{\sc D.~M. Creedon and J.~J.~H. Miller}, {\em The stability properties of
  q-step backward difference schemes}, BIT, 15 (1975), pp.~244--249.

\bibitem{Deng2020}
{\sc W.~Deng, R.~Hou, W.~Wang, and P.~Xu}, {\em Modeling Anomalous Diffusion:
  From Statistics to Mathematics}, World Scientific, Singapore, 2020.

\bibitem{DengLiQianWang:18}
{\sc W.~Deng, B.~Li, Z.~Qian, and H.~Wang}, {\em Time discretization of a
  tempered fractional {Feynman}-{Kac} equation with measure data}, SIAM J.
  Numer. Anal., 56 (2018), pp.~3249--3275.

\bibitem{ford2017}
{\sc N.~J. Ford and Y.~Yan}, {\em An approach to construct higher order time
  discretisation schemes for time fractional partial differential equations
  with nonsmooth data}, Fract. Calc. Appl. Anal., 20 (2017), pp.~1076--1105.

\bibitem{Jin2016}
{\sc B.~Jin, R.~Lazarov, and Z.~Zhou}, {\em Two {fully} {discrete} {schemes}
  for {fractional} {diffusion} and {diffusion}-{wave} {equations} with
  {nonsmooth} {data}}, SIAM J. Sci. Comput., 38 (2016), pp.~A146--A170.

\bibitem{jin2017}
{\sc B.~Jin, B.~Li, and Z.~Zhou}, {\em Correction of {high}-{order} {BDF}
  {convolution} {quadrature} for {fractional} {evolution} {equations}}, SIAM J.
  Sci. Comput., 39 (2017), pp.~A3129--A3152.

\bibitem{Kac:1949}
{\sc M.~Kac}, {\em On distributions of certain {W}iener functionals}, Trans.
  Amer. Math. Soc., 65 (1949), pp.~1--13.

\bibitem{li:2020}
{\sc B.~Li, K.~Wang, and Z.~Zhou}, {\em Long-time {accurate} {symmetrized}
  {implicit}-explicit {BDF} {methods} for a {class} of {parabolic} {equations}
  with {non}-self-adjoint {operators}}, SIAM J. Numer. Anal., 58 (2020),
  pp.~189--210.

\bibitem{LiDengZhao:19}
{\sc C.~Li, W.~Deng, and L.~Zhao}, {\em Well-posedness and numerical algorithm
  for the tempered fractional differential equations}, Discrete Contin. Dyn.
  Syst. Ser. B, 24 (2019), pp.~1989--2015.

\bibitem{lubich1988-1}
{\sc C.~Lubich}, {\em Convolution {quadrature} and {discretized} {operational}
  {calculus} {I}.}, Numer. Math., 52 (1988), pp.~129--145.

\bibitem{lubich1988-2}
{\sc C.~Lubich}, {\em Convolution quadrature and discretized operational
  calculus. {II}}, Numer. Math., 52 (1988), pp.~413--425.

\bibitem{lubich1996}
{\sc C.~Lubich, I.~H. Sloan, and V.~Thom{\'e}e}, {\em Nonsmooth {data} {error}
  {estimates} for {approximations} of an {evolution} {equation} with a
  {positive}-{type} {memory} {term}}, Math. Comp., 65 (1996), pp.~1--17.

\bibitem{Podlubny1999}
{\sc I.~Podlubny}, {\em Fractional Differential Equations}, Academic Press, San
  Diego, 1999.

\bibitem{Sun:2020}
{\sc J.~Sun, D.~Nie, and W.~Deng}, {\em Error estimates for backward fractional
  {Feynman}-{Kac} equation with non-smooth initial data}, J. Sci. Comput., 84
  (2020), p.~6.

\bibitem{Thomee2006}
{\sc V.~Thom{\'e}e}, {\em Galerkin Finite Element Methods for Parabolic
  Problems}, Springer Berlin Heidelberg, 2nd~ed., 1997.

\bibitem{TurgemanCarmiBarkai:09}
{\sc L.~Turgeman, S.~Carmi, and E.~Barkai}, {\em Fractional {Feynman}-{Kac}
  {equation} for {Non}-{Brownian} {functionals}}, Phys. Rev. Lett., 103 (2009),
  p.~190201.

\bibitem{yan2018}
{\sc Y.~Yan, M.~Khan, and N.~J. Ford}, {\em An {analysis} of the {modified}
  {L1} {scheme} for {time}-{fractional} {partial} {differential} {equations}
  with {nonsmooth} {data}}, SIAM J. Numer. Anal., 56 (2018), pp.~210--227.

\bibitem{ZhangDeng:17}
{\sc Z.~Zhang and W.~Deng}, {\em Numerical approaches to the functional
  distribution of anomalous diffusion with both traps and flights}, Adv.
  Comput. Math., 43 (2017), pp.~699--732.

\end{thebibliography}

\end{document}